\newtheorem{theorem}{Theorem}
\theoremstyle{plain}
\newtheorem{corollary}{Corollary}
\newtheorem{definition}{Definition}
\newtheorem{example}{Example}
\newtheorem{lemma}{Lemma}
\newtheorem{problem}{Problem}
\newtheorem{proposition}{Proposition}
\newtheorem{remark}{Remark}
\numberwithin{equation}{section}
\begin{document}
\title[Solving a problem of angiogenesis]{Solving a problem of angiogenesis of degree three}
\author{Anastasios N. Zachos}
\dedicatory{Dedicated to Professors Dr. Alexander O. Ivanov and
Dr. Alexey A. Tuzhilin for their contributions on minimal trees}
\address{Greece}
\email{azachos@gmail.com}

\keywords{Universal absorbing Fermat-Torricelli set, Universal
Fermat-Torricelli minimum value, generalized Gauss problem,
weighted Fermat-Torricelli problem, weighted Fermat-Torricelli
point, absorbing Fermat-Torricelli tree, absorbing generalized
Gauss tree, evolutionary tree} \subjclass{51E12, 52A10, 52A55,
51E10}
\begin{abstract}

An absorbing weighted Fermat-Torricelli tree of degree four is a
weighted Fermat-Torricelli tree of degree four which is derived as
a limiting tree structure from a generalized Gauss tree of degree
three (weighted full Steiner tree) of the same boundary convex
quadrilateral in $\mathbb{R}^{2}.$ By letting the four variable
positive weights which correspond to the fixed vertices of the
quadrilateral and satisfy the dynamic plasticity equations of the
weighted quadrilateral, we obtain a family of limiting tree
structures of generalized Gauss trees which concentrate to the
same weighted Fermat-Torricelli tree of degree four (universal
absorbing Fermat-Torricelli tree). The values of the residual
absorbing rates for each derived weighted Fermat-Torricelli tree
of degree four of the universal Fermat-Torricelli tree form a
universal absorbing set. The minimum of the universal absorbing
Fermat-Torricelli set is responsible for the creation of a
generalized Gauss tree of degree three for a boundary convex
quadrilateral derived by a weighted Fermat-Torricelli tree of a
boundary triangle (Angiogenesis of degree three). Each value from
the universal absorbing set contains an evolutionary process of a
generalized Gauss tree of degree three.

\end{abstract}\maketitle

\section{Introduction}

We shall describe the structure of a generalized Gauss tree with
degree three and a weighted Fermat-Torricelli tree of degree four
with respect to a boundary convex quadrilateral
$A_{1}A_{2}A_{3}A_{4}$ in $\mathbb{R}^{2}.$

\begin{definition}{\cite[Section~2, pp.~2]{GilbertPollak:68}}\label{topology}
A tree topology is a connection matrix specifying which pairs of
points from the list
$A_{1},A_{2},A_{3},A_{4},A_{0},A_{0^{\prime}}$ have a connecting
linear segment (edge).
\end{definition}

\begin{definition}{\cite[Subsection~1.2, pp.~8]{Ci}\cite{GilbertPollak:68},\cite{IvanovTuzhilin:01}}\label{degreeSteinertree}
The degree of a vertex is the number of connections of the vertex
with linear segments.
\end{definition}

Let $A_{1}, A_{2}, A_{3}, A_{4}$ be four non-collinear points in
$\mathbb{R}^{2}$ and $B_{i}$ be a positive number (weight) which
corresponds to $A_{i}$ for $i=1,2,3,4.$

The weighted Fermat-Torricelli problem for four non-collinear
points (4wFT problem) in $\mathbb{R}^{2}$ states that:

\begin{problem}[4wFT problem]\label{4FTproblemR^2}
Find a point (weighted Fermat-Torricelli point) $A_{0}\in
\mathbb{R}^{2},$ which minimizes
\begin{equation}\label{tetrobjfunction}
f(A_{0})=\sum_{i=1}^{4}B_{i}\|A_{0}-A_{i}\|,
\end{equation}
 where $\|\cdot\|$ denotes the Euclidean distance
\end{problem}

By letting $B_{1}=B_{2}=B_{3}=B_{4}$ in the 4wFT problem we obtain
the following two cases:

(i) If $A_{1}A_{2}A_{3}A_{4}$ is a convex quadrilateral, then
$A_{0}$ is the intersection point of the two diagonals
$A_{1}A_{3}$ and $A_{2}A_{4},$

(ii) If $A_{i}$ is an interior point of $\triangle
A_{j}A_{k}A_{l},$ then $A_{0}\equiv A_{i},$ for $i,j,k,l=1,2,3,4$
and  $i\ne j\ne k\ne l.$

The characterization of the (unique) solution of the 4wFT problem
in $\mathbb{R}^{2}$ is given by the following result which has
been proved in \cite{BolMa/So:99} and \cite{Kup/Mar:97}:

\begin{theorem}{\cite[Theorem~18.37, p.~250]{BolMa/So:99},\cite{Kup/Mar:97}}\label{theor1}

Let $A_{0}$ be a weighted minimum point which minimizes
(\ref{tetrobjfunction}).

(a) Then, the 4wFT point $A_{0}$ uniquely exists.

(b) If for each point $A_{i}\in\{A_{1},A_{2},A_{3},A_{4}\}$

\begin{equation}\label{floatingcase}
\|\sum_{j=1, i\ne j}^{4}B_{j}\vec{u}_{ij}\|>B_i,
\end{equation}
for $i,j=1,2,3,4$  holds, then

($b_{1}$) $A_{0}$ does not belong to $\{A_{1},A_{2},A_{3},A_{4}\}$
 and

($b_{2}$)
\begin{equation}\label{floatingequlcond}
 \sum_{i=1}^{4}B_{i}\vec{u}_{0i}=\vec 0,
\end{equation}
where $\vec{u}_{kl}$ is the unit vector from $A_{k}$ to $A_{l},$
for $k,l\in\{0,1,2,3,4\}$
 (Weighted Floating Case).\\
 (c) If there is a point $A_{i}\in\{A_{1},A_{2},A_{3},A_{4}\}$
 satisfying
 \begin{equation}
 \|{\sum_{j=1,i\ne j}^{4}B_{j}\vec{u}_{ij}}\|\le B_i,
\end{equation}
then $A_{0}\equiv A_{i}.$ (Weighted Absorbed Case).
\end{theorem}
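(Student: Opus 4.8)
The plan is to regard Problem~\ref{4FTproblemR^2} as an unconstrained convex minimization problem in the plane and to read off the geometry of the minimizer from first-order (subdifferential) optimality conditions.

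\emph{Existence and uniqueness (part (a)).} The objective $f(A_{0})=\sum_{i=1}^{4}B_{i}\|A_{0}-A_{i}\|$ is continuous, and the triangle inequality gives $f(A_{0})\ge\big(\sum_{i=1}^{4}B_{i}\big)\|A_{0}\|-\sum_{i=1}^{4}B_{i}\|A_{i}\|$, so $f$ is coercive and attains a global minimum on a sufficiently large closed disk by the Weierstrass theorem. For uniqueness, $f$ is a positive combination of the convex maps $A_{0}\mapsto\|A_{0}-A_{i}\|$, hence convex; and restricted to an arbitrary line $\ell\subset\mathbb{R}^{2}$ the summand $\|A_{0}-A_{i}\|$ is \emph{strictly} convex unless $\ell$ passes through $A_{i}$. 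Since $A_{1},A_{2},A_{3},A_{4}$ are not all collinear, no line contains all four, so along every line at least one summand is strictly convex; therefore $f$ is strictly convex on $\mathbb{R}^{2}$ and its minimizer $A_{0}$ is unique.

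\emph{The optimality test at a vertex.} The crux is the one-sided directional derivative of $f$ at a vertex $A_{i}$. For a unit vector $v$, the terms $B_{j}\|\,\cdot\,-A_{j}\|$ with $j\ne i$ are differentiable at $A_{i}$ (as $A_{i}\ne A_{j}$), with gradient $-B_{j}\vec{u}_{ij}$ there, while $B_{i}\|\,\cdot\,-A_{i}\|$ contributes $B_{i}$, so
\[
D_{v}f(A_{i})=B_{i}-\Big\langle \sum_{j=1,\ j\ne i}^{4}B_{j}\vec{u}_{ij},\ v\Big\rangle .
\]
By convexity, $A_{i}$ is the (global) minimizer precisely when $D_{v}f(A_{i})\ge 0$ for every unit $v$ (equivalently $0\in\partial f(A_{i})$), and maximizing the right-hand side over $v$ shows this is equivalent to $\big\|\sum_{j\ne i}B_{j}\vec{u}_{ij}\big\|\le B_{i}$. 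This single dichotomy is what separates the two regimes.

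\emph{Parts (b) and (c).} If the floating hypothesis (\ref{floatingcase}) holds at \emph{every} vertex, then for each $i$, taking $v$ parallel to the nonzero vector $\sum_{j\ne i}B_{j}\vec{u}_{ij}$ yields $D_{v}f(A_{i})=B_{i}-\big\|\sum_{j\ne i}B_{j}\vec{u}_{ij}\big\|<0$, so $A_{i}$ is not a minimizer; this proves $(b_{1})$. Consequently the unique minimizer $A_{0}$ lies in $\mathbb{R}^{2}\setminus\{A_{1},A_{2},A_{3},A_{4}\}$, where $f$ is differentiable, so $\nabla f(A_{0})=\vec 0$; since $\nabla\|A_{0}-A_{i}\|=-\vec{u}_{0i}$ at such a point, this is exactly $(b_{2})$, namely $\sum_{i=1}^{4}B_{i}\vec{u}_{0i}=\vec 0$. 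For (c), if $\big\|\sum_{j\ne i}B_{j}\vec{u}_{ij}\big\|\le B_{i}$ for some $i$, then $D_{v}f(A_{i})\ge B_{i}-\big\|\sum_{j\ne i}B_{j}\vec{u}_{ij}\big\|\ge 0$ for all unit $v$, so $A_{i}$ is a local and hence (by convexity) global minimizer; uniqueness from part (a) forces $A_{0}\equiv A_{i}$. The same uniqueness shows the absorbed vertex is unique, and that (b) and (c) are mutually exclusive yet jointly exhaustive, since (c) holding at $A_{i}$ is precisely the failure of (\ref{floatingcase}) at $A_{i}$.

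\emph{Main obstacle.} The only genuinely delicate points are: (i) the strict-convexity step, which must invoke non-collinearity (not merely pairwise distinctness) to exclude a line through all four points and hence a flat direction of $f$; and (ii) the optimality analysis at the non-differentiable points $A_{i}$, where stationarity has to be read off from the one-sided directional derivative, and the borderline case $\big\|\sum_{j\ne i}B_{j}\vec{u}_{ij}\big\|=B_{i}$ must be assigned to the absorbed regime — which is legitimate precisely because strict convexity already guarantees the minimizer is unique.
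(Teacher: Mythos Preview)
Your argument is correct and is essentially the standard convex-analysis proof of this classical characterization: coercivity plus strict convexity (using non-collinearity) for (a), and the subdifferential/directional-derivative test at the vertices for the dichotomy in (b) and (c). Note, however, that the paper does not supply its own proof of this theorem at all --- it is quoted as a known result from \cite[Theorem~18.37, p.~250]{BolMa/So:99} and \cite{Kup/Mar:97}, and the arguments in those references proceed along the same lines as yours.
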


%------------------------------------------------------------------------------------
The inverse weighted Fermat-Torricelli problem for four
non-collinear points (Inverse 4wFT problem) in $\mathbb{R}^{2}$
states that:

\begin{problem}{Inverse 4wFT problem}\label{inv4wFT}
Given a point $A_{0}$ which belongs to the convex hull of
$A_{1}A_{2}A_{3}A_{4}$ in $\mathbb{R}^{2}$, does there exist a
unique set of positive weights $B_{i},$ such that
\begin{displaymath}
 B_{1}+B_{2}+B_{3}+B_{4} = c =const,
\end{displaymath}
for which $A_{0}$ minimizes
\begin{displaymath}
 \sum_{i=1}^{4}B_{i}\|\|A_{0}-A_{i}\|.
\end{displaymath}

\end{problem}
By letting $B_{4}=0$ in the inverse 4wFT problem we derive the
inverse 3wfT problem which has been introduced and solved by S.
Gueron and R. Tessler in \cite[Section~4,p.~449]{Gue/Tes:02}.

In 2009, a negative answer with respect to the inverse 4wFT
problem is given in \cite[Proposition~4.4,p.~417]{Zachos/Zou:88}
by deriving a dependence between the four variable weights in
$\mathbb{R}^{2}.$ In 2014, we obtain the same dependence of
variable weights on some $C^{2}$ surfaces in $\mathbb{R}^{3}$ and
we call it the "dynamic plasticity of convex quadrilaterals"
(\cite[Problem~2, Definition~12,
Theorem~1,p.92,p.97-98]{Zachos:14}).

%------------------------------------------------------------------------------------

An important generalization of the Fermat-Torricelli problem is
the generalized Gauss problem (or full weighted Steiner tree
problem) for convex quadrilaterals in $\mathbb{R}^{2}$ which has
been studied on the K-plane (Sphere, Hyperbolic plane, Euclidean
plane) in \cite{Zachos:98}.

We mention the following theorem which provide a characterization
for the solutions of the (unweighted )Gauss problem in
$\mathbb{R}^{2}.$

\begin{theorem}{\cite[Theorem(*),pp.~328]{BolMa/So:99}}
Any solution of the Gauss problem is a Gauss tree (equally
weighted full Steiner tree) with at most two (equally weighted)
Fermat-Torricelli points (or Steiner points) where each
Fermat-Torricelli point has degree three, and the angle between
any two edges incident with a Fermat-Torricelli point is of
$120^{\circ}.$
\end{theorem}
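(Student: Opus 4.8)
The plan is to prove this classical structure theorem by local‑perturbation arguments applied to an arbitrary minimizer of the (equally weighted) Gauss problem. Existence of a minimizer follows from a routine compactness argument, after noting that one may restrict attention to trees of total length at most $\sum_{i<j}\|A_i-A_j\|$ contained in a fixed ball around $A_1,A_2,A_3,A_4$; fix such a minimizing tree $T$, with vertex set consisting of the four terminals together with finitely many auxiliary (Fermat--Torricelli / Steiner) vertices, and reduce to the case where $T$ has no redundant vertices.

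First I would show that every auxiliary vertex of $T$ has degree at least three. An auxiliary vertex of degree one can be deleted with its edge, strictly shortening $T$; an auxiliary vertex $v$ of degree two with neighbours $x,y$ can be replaced by the single segment $xy$, which by the triangle inequality is no longer, and is strictly shorter unless $v$ already lies on $xy$, in which case $v$ is not a genuine branch point. Hence in the reduced minimizer every non‑terminal vertex has degree $\ge 3$ (and, by uncrossing crossing edges, $T$ is moreover a plane tree, though that will not be needed below).

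The heart of the argument --- and the step I expect to require the most care to present cleanly --- is the angle condition: at every vertex of $T$, the angle between any two incident edges is at least $120^{\circ}$. Suppose to the contrary that a vertex $v$ has two incident edges, towards $a$ and towards $b$, with $\angle avb<120^{\circ}$. Choose $a'$ on the first edge and $b'$ on the second, close enough to $v$ that $\triangle a'vb'$ still has angle $<120^{\circ}$ at $v$; then its Fermat--Torricelli point $F$ is an interior point, and since $F$ is the \emph{unique} minimizer of $P\mapsto\|Pa'\|+\|Pb'\|+\|Pv\|$ while $v\ne F$ is a competitor, we get $\|Fa'\|+\|Fb'\|+\|Fv\|<\|va'\|+\|vb'\|$. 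Now modify $T$: subdivide the two edges at $a'$ and $b'$, delete the two sub‑segments $va'$ and $vb'$ --- this splits $T$ into three connected pieces, containing $v$, $a'$, and $b'$ respectively --- and reattach them by adding the new vertex $F$ together with the three edges $Fa'$, $Fb'$, $Fv$. The result is again a spanning tree (we removed two edges and added one vertex and three edges, rejoining three components into one, so the Euler relation $E=V-1$ is restored), and it is strictly shorter than $T$, contradicting minimality. The only subtle points here are the bookkeeping that this surgery neither disconnects the tree nor creates a cycle, and the choice of $a',b'$ guaranteeing that the angle hypothesis persists in the small triangle.

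Finally I combine these facts. At an auxiliary vertex the incident edges cut the total angle $360^{\circ}$ into consecutive angles each $\ge 120^{\circ}$ by the previous step; since $3\cdot120^{\circ}=360^{\circ}$ already exhausts the available angle, there are at most three incident edges, so the degree is at most three, hence exactly three; moreover the three consecutive angles are each exactly $120^{\circ}$, which is precisely the equilibrium condition $\vec u_{va}+\vec u_{vb}+\vec u_{vc}=\vec 0$ for a (three‑point, equally weighted) Fermat--Torricelli point with neighbours $a,b,c$. To bound the number $s$ of auxiliary vertices, apply the handshake identity to $T$: it has $4+s$ vertices and $3+s$ edges, so $2(3+s)=\sum_{\text{vertices}}\deg\ge 4\cdot1+3s$, giving $s\le 2$. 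Thus any solution of the Gauss problem is an equally weighted full Steiner tree with at most two Fermat--Torricelli points, each of degree three and with $120^{\circ}$ angles between incident edges, as claimed; cf.\ \cite{GilbertPollak:68} for the same circle of ideas.
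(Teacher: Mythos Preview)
The paper does not contain its own proof of this statement: it is quoted as a known result from \cite[Theorem(*), pp.~328]{BolMa/So:99} and serves only as background motivation for the definitions that follow, so there is no in-paper argument to compare against.

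Judged on its own, your sketch is the classical local-variation proof (essentially the Gilbert--Pollak argument you cite): prune degree-one and degree-two auxiliary vertices, use a local Fermat--Torricelli replacement to force every incident angle to be at least $120^{\circ}$, conclude that each Steiner vertex has degree exactly three with three equal $120^{\circ}$ angles, and finish with the handshake count $2(3+s)\ge 4+3s$ to obtain $s\le 2$. The logic is sound. Two places would benefit from a sentence of extra care if you write it out in full: in the angle step you need all three angles of $\triangle a'vb'$ to be below $120^{\circ}$ (not just the one at $v$) to guarantee the Fermat point is interior, which is easily arranged by taking $a',b'$ equidistant from $v$; and in the surgery you should remark that if the new point $F$ happens to land on an existing edge or vertex of $T$, a further trivial simplification only shortens $T$ more, so the contradiction persists.
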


We need to mention all the necessary definitions of the weighted
Fermat-Torricelli tree and weighted Gauss tree topologies, in
order to derive some important evolutionary structures of the
Fermat-Torricelli trees (Absorbing Fermat-Torricelli trees) and
Gauss trees (Absorbing Gauss trees) which have been introduced in
\cite[Definitions~1-7,p.~1070-1071]{Zachos:15}.

\begin{definition}\label{FTtopology3}
A weighted Fermat-Torricelli tree topology of degree three is a
tree topology with all boundary vertices of a triangle having
degree one and one interior vertex (weighted Fermat-Torricelli
point) having degree three.
\end{definition}

\begin{definition}\label{FTtopology4}
A weighted Fermat-Torricelli tree topology of degree four is a
tree topology with all boundary vertices of a convex quadrilateral
having degree one and one interior vertex (4wFT point) having
degree four.
\end{definition}

%------------------------------------------------------------------------------------------
\begin{definition}{\cite[Subsection~3.7,pp.~6]{GilbertPollak:68}}\label{Steinertopology}
A weighted Gauss tree topology (or full Steiner tree topology) of
degree three is a tree topology with all boundary vertices of a
convex quadrilateral having degree one and two interior vertices
(weighted Fermat-Torricelli points) having degree three.
\end{definition}

\begin{definition}\label{Gaussrtree4}
A weighted Fermat-Torricelli tree of weighted minimum length with
a Fermat-Torricelli tree topology of degree four is called a
weighted Fermat-Torricelli tree of degree four.
\end{definition}

\begin{definition}{\cite[Subsection~3.7,pp.~6]{GilbertPollak:68},\cite{Zachos:15}}\label{Gaussrtree3}
A weighted Gauss tree of weighted minimum length with a Gauss tree
topology of degree three is called a generalized Gauss tree of
degree three or a full weighted minimal Steiner tree.
\end{definition}

In 2014, we study an important generalization of the weighted
Gauss (tree) problem that we call a generalized Gauss problem for
convex quadrilaterals in $\mathbb{R}^{2}$ by using a mechanical
construction which extends the mechanical construction of
Gueron-Tessler in the sense of P$\acute{o}$lya and Varigon
(\cite[Problem~1, Theorem~4, pp.1073-1075]{Zachos:15}).
%------------------------------------------------------------

%--------------------------------------------------------------

We state a generalized Gauss problem for a weighted convex
quadrilateral $A_{1}A_{2}A_{3}A_{4}.$ in $\mathbb{R}^{2},$ such
that the weights $B_{i}$ which correspond to $A_{i}$ and
$B_{00^{\prime}}\equiv x_{G},$ satisfy the inequalities

\[|B_{i}-B_{j}|<B_{k}<B_{i}+B_{j},\]

and

\[|B_{t}-B_{m}|<B_{n}<B_{t}+B_{m}\]

where $x_{G}$ is the variable weight which corresponds to the
given distance $l\equiv \|A_{0}-A_{0^{\prime}}\|,$ for
$i,j,k\in\{1,4,00^{\prime}\},$ $t,m,n\in\{2,3,00^{\prime}\}$ and
$i\ne j\ne k,$ $t\ne m\ne n.$

\begin{problem}{\cite[Problem~1,p.~1073]{Zachos:15}}\label{genGaussproblem}
Given $l,$ $B_{1},$ $B_{2},$ $B_{3},$ $B_{4},$ find a generalized
Gauss tree of degree three with respect to $A_{1}A_{2}A_{3}A_{4}$
which minimizes

\begin{equation} \label{fundamentaleqxGauss}
B_{1}\|A_{1}-A_{0}\|+B_{2}\|A_{2}-A_{0}\|+B_{3}\|A_{3}-A_{0^{\prime}}\|+B_{4}\|A_{4}-A_{0^{\prime}}\|+x_{G}l.
\end{equation}

\end{problem}
%------------------------------------------------------------------------------

For $l=0,$ we obtain a weighted Fermat-Torricelli tree of degree
four.

\begin{definition}{\cite[Definition~8, p.1076]{Zachos:15}}\label{varGauss} We call the variable $x_{G}$
which depend on $l,$ a generalized Gauss variable.
\end{definition}

\begin{definition}{\cite[Definition~9, p.1080]{Zachos:15}}\label{varGauss}\label{absorbingrate}
The residual absorbing rate of a generalized Gauss tree of degree
at most four with respect to a boundary convex quadrilateral is
\[\sum_{i=1}^{4}B_{i}-x_{G}.\]
\end{definition}

\begin{definition}{\cite[Definition~10, p.1080]{Zachos:15}}\label{AbsorbingGeneralizedtree}
An absorbing generalized Gauss tree of degree three is a
generalized Gauss tree of degree three with residual absorbing
rate
\[\sum_{i=1}^{4}B_{i}-x_{G}.\]
\end{definition}

%\begin{definition}\label{AbsorbingGeneralizedtree}
%An absorbing weighted Fermat-Torricelli tree of degree four is a
%weighted Fermat-Torricelli tree of degree four with residual
%absorbing rate
%\[\sum_{i=1}^{4}B_{i}-u_{FT}.\]
%\end{definition}

In this paper, we prove that the weighted Fermat-Torricelli
problem for convex quadrilaterals cannot be solved analytically,
by extending the geometrical solution of E. Torricelli and
E.Engelbrecht for convex quadrilaterals in $\mathbb{R}^{2}$
(Section~2, Theorems~3,4)

In section~3, we derive a solution for the generalized Gauss
problem in $\mathbb{R}^{2},$ in the spirit of K. Menger which
depends only on five given positive weighta and five Euclidean
distances which determine a convex quadrilateral (Section~3,
Theorem~5).

In section~4, We give a new approach concerning the dynamic
plasticity of quadrilaterals by deriving a new system of equations
different from the dynamic plasticity equations which have been
deduced in \cite[Proposition~4.4,p.~417]{Zachos/Zou:88} and
\cite[Problem~2, Definition~12, Theorem~1,p.92,p.97-98]{Zachos:14}
(Section~4, Proposition~2). Furthermore, we obtain a surprising
connection of the dynamic plasticity of quadrilaterals with a
problem of Rene Descarted posed in 1638.

In section~5, we introduce an absorbing weighted Fermat-Torricelli
tree of degree four which is derived as a limiting tree structure
from a generalized Gauss tree of degree three of the same boundary
convex quadrilateral in $\mathbb{R}^{2}.$

Then, by assuming that the four variable positive weights which
correspond to the fixed vertices of the boundary quadrilateral and
satisfy the dynamic plasticity equations, we obtain a family of
limiting tree structures of generalized Gauss trees of degree
three which concentrate to the same weighted Fermat-Torricelli
tree of degree four in a geometric sense (Universal absorbing
Fermat-Torricelli tree).

Furthermore, we calculate the values of the universal rates of a
Universal absorbing tree regarding a fixed boundary quadrilaterals
(Section~5, Examples~2,3).

In section~6, we introduce a class of Euclidean minimal tree
structures that we call steady trees and evolutionary trees
(Section~6, Definitions~15,16). Thus, the minimum of the universal
absorbing Fermat-Torricelli set (Universal Fermat-Torricelli
minimum value) leads to the creation of a generalized Gauss tree
of degree three for the same boundary convex quadrilateral which
is derived by a weighted Fermat-Torricelli tree of degree four. A
universal absorbing Fermat-Torricelli minimum value corresponds to
the intersection point (4wFT point). This quantity is of
fundamental importance, because by attaining this value the
absorbing Fermat-Torricelli tree start to grow and will be able to
produce a generalized Gauss tree of degree three (Evolutionary
tree). Each specific value from the universal absorbing set gives
an evolutionary process of a generalized Gauss tree of degree
three regarding a fixed boundary quadrilateral by spending a
positive quantity from the storage of the universal
Fermat-Torricelli quantity which stimulates the evolution at the
4wFT point (Section~6, Example~4, Angiogenesis of degree three).

%--------------------------------------------------------------
%------------------------------------------------------------

\section{Extending Torricelli-Engelbrecht's solution for convex quadrilaterals}\label{sec1}

The weighted Torricelli-Engelbrecht solution for a triangle
$\triangle A_{1}A_{2}A_{3}$ in the weighted floating case is given
by the following proposition:

\begin{lemma}{\cite{Gue/Tes:02},\cite{ENGELBRECHt:1877}}\label{trianglesolveFT}
If $A_{0}$ is an interior weighted Fermat-Torricelli point of
$\triangle A_{1}A_{2}A_{3},$ then
\begin{equation}
\angle
A_{i}A_{0}A_{j}\equiv\alpha_{i0j}=\arccos{\left(\frac{B_{k}^{2}-B_{i}^{2}-B_{j}^{2}}{2
B_{i}B_{j}}\right)}.
\end{equation}
\end{lemma}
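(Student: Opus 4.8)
The plan is to recover the three angles at an interior weighted Fermat--Torricelli point from the vanishing of the weighted gradient of $f$, exactly as in the classical (unweighted) case but keeping track of the weights. First I would invoke the first-order optimality condition: since $A_0$ is an interior minimizer of $f(A_0)=\sum_{i=1}^{3}B_i\|A_0-A_i\|$ and the weight $B_i$ terms are each differentiable away from $A_i$, the gradient $\nabla f(A_0)=\sum_{i=1}^{3}B_i\,\vec u_{0i}$ must vanish. (For three points this is the weighted analogue of condition (\ref{floatingequlcond}) above, restricted to the triangle; the floating hypothesis of Lemma~\ref{trianglesolveFT} guarantees $A_0\notin\{A_1,A_2,A_3\}$, so the stationarity equation is the genuine condition.)

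Next I would extract the angles from the balanced-force relation $B_1\vec u_{01}+B_2\vec u_{02}+B_3\vec u_{03}=\vec 0$. Isolating one term, $B_k\vec u_{0k}=-(B_i\vec u_{0i}+B_j\vec u_{0j})$, and taking squared Euclidean norms of both sides gives
\begin{equation}
B_k^{2}=B_i^{2}+B_j^{2}+2B_iB_j\,(\vec u_{0i}\cdot\vec u_{0j}).
\end{equation}
Since $\vec u_{0i}$ and $\vec u_{0j}$ are unit vectors emanating from $A_0$ along $A_0A_i$ and $A_0A_j$, their inner product is $\cos\angle A_iA_0A_j=\cos\alpha_{i0j}$. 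Solving for the cosine yields
\begin{equation}
\cos\alpha_{i0j}=\frac{B_k^{2}-B_i^{2}-B_j^{2}}{2B_iB_j},
\end{equation}
and applying $\arccos$ gives the claimed formula. One should note the triangle-type inequalities $|B_i-B_j|<B_k<B_i+B_j$ (which hold under the floating hypothesis, and in fact are equivalent to it for three points) are exactly what makes the right-hand side lie in $(-1,1)$, so $\arccos$ is well defined and $\alpha_{i0j}\in(0,\pi)$; I would remark on this to make the statement meaningful. The three angles so obtained automatically sum to $2\pi$, which is consistent.

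The only real subtlety—hardly an obstacle—is justifying that the stationarity condition holds with a \emph{genuine} gradient rather than a subgradient, i.e. that the optimal $A_0$ does not coincide with a vertex $A_i$; this is precisely what the hypothesis ``$A_0$ is an interior weighted Fermat--Torricelli point'' asserts, so differentiability at $A_0$ is unproblematic and the argument is essentially a two-line computation once the optimality condition is in hand. I would present the norm-squaring step explicitly and leave the elementary inequality bookkeeping to a parenthetical remark.
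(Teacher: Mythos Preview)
Your argument is correct and is the standard derivation. Note, however, that the paper does not actually supply a proof of this lemma: it is stated with citations to \cite{Gue/Tes:02} and \cite{ENGELBRECHt:1877} and left unproved. That said, the very technique you use---isolating one weighted unit vector in the balanced-force condition and squaring both sides---is exactly what the paper itself deploys a few lines later in the proof of the Torricelli--Engelbrecht extension for quadrilaterals (see the derivation of (\ref{second}) and (\ref{third}) from (\ref{vectorbalance1}) and (\ref{vectorbalance2})). So your proof is fully in the spirit of the paper's methods, and your remark that the floating-case inequalities $|B_i-B_j|<B_k<B_i+B_j$ are precisely what place the cosine in $(-1,1)$ is a useful observation the paper leaves implicit.
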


Let  $A_{1}A_{2}A_{3}A_{4}$ be a convex quadrilateral in
$\mathbb{R}^{2},$ $O$ be the intersection point of the two
diagonals and $B_{i}$ be a given weight which corresponds to the
vertex $A_{i},$ $A_{0}$ be the weighted Fermat-Torricelli point in
the weighted floating case (Theorem~) and $\vec{U}_{ij}$ be the
unit vector from $A_{i}$ to $A_{j},$ for $i,j=1,2,3,4.$

We mention the geometric plasticity principle of quadrilaterals in
$\mathbb{R}^{2},$

\begin{lemma}{\cite{Zachos/Zou:88},\cite[Definition~13,Theorem~3, Proposition~8, Corollary~4 p.~103-108]{Zachos:14}}\label{geomplasticityR2}
Suppose that the weighted floating case of the weighted
Fermat-Torricelli point $A_{0}$ point with respect to
$A_{1}A_{2}A_{3}A_{4}$ is satisfied:
\[\left\|
B_{i}\vec{U}_{ki}+B_{j}\vec{U}_{kj}+B_{m}\vec{U}_{km}\right\|>
B_{k},\] for each $i,j,k,m=1,2,3,4$ and $i\ne j\ne k\ne m.$ If
$A_{0}$ is connected with every vertex $A_{k}$ for $k=1,2,3,4$ and
we select a point $A_{k}^{\prime}$ with non-negative weight
$B_{k}$ which lies on the ray $A_{k}A_{0}$ and the quadrilateral
$A_{1}^{\prime}A_{2}^{\prime}A_{3}^{\prime}A_{4}^{\prime}$ is
constructed such that:
\[\left\|
B_{i}\vec{U}_{k^{\prime}i^{\prime}}+B_{j}\vec{U}_{k^{\prime}j^{\prime}}+B_{m}\vec{U}_{k^{\prime}m^{\prime}}\right\|>
B_{k},\] for each
$i^{\prime},j^{\prime},k^{\prime},m^{\prime}=1,2,3,4$ and
$i^{\prime}\ne j^{\prime}\ne k^{\prime}\ne m^{\prime}.$
 Then the weighted Fermat-Torricelli point
$A_{0}^{\prime}$ is identical with $A_{0}.$
\end{lemma}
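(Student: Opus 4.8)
The plan is to carry the equilibrium condition that singles out $A_{0}$ for $A_{1}A_{2}A_{3}A_{4}$ over to the deformed quadrilateral $A_{1}'A_{2}'A_{3}'A_{4}'$ unchanged, and then to appeal to the uniqueness part of Theorem~\ref{theor1}.

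First I would record what the hypothesis gives us: since $A_{0}$ is the weighted Fermat--Torricelli point of $A_{1}A_{2}A_{3}A_{4}$ in the floating case, Theorem~\ref{theor1}$(b_{2})$ yields the balance relation $\sum_{k=1}^{4}B_{k}\vec u_{0k}=\vec 0$. Next I would use the collinearity built into the construction. Each $A_{k}'$ lies on the ray from $A_{k}$ through $A_{0}$, so $A_{k}'=A_{k}+t_{k}(A_{0}-A_{k})$ for some $t_{k}\ge 0$; in the plasticity deformation $A_{k}'$ sits between $A_{k}$ and $A_{0}$, i.e. $0\le t_{k}<1$, hence $A_{k}'-A_{0}=(t_{k}-1)(A_{0}-A_{k})$ is a \emph{positive} multiple of $A_{k}-A_{0}$ and the unit vector from $A_{0}$ to $A_{k}'$ equals $\vec u_{0k}$. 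Summing against the (unchanged) weights, $\sum_{k=1}^{4}B_{k}\vec u_{0k'}=\sum_{k=1}^{4}B_{k}\vec u_{0k}=\vec 0$, so the single point $A_{0}$ already satisfies the equilibrium condition \eqref{floatingequlcond} for the new quadrilateral.

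Finally I would invoke uniqueness. The function $f'(X)=\sum_{k=1}^{4}B_{k}\|X-A_{k}'\|$ is convex on $\mathbb{R}^{2}$ and, because $A_{1}',A_{2}',A_{3}',A_{4}'$ are not collinear, strictly convex; so $f'$ has a unique minimiser, which by Theorem~\ref{theor1}(a) is $A_{0}'$. Since the floating case is assumed for $A_{1}'A_{2}'A_{3}'A_{4}'$, Theorem~\ref{theor1}$(b_{1})$--$(b_{2})$ tells us $A_{0}'$ is an interior point, distinct from every $A_{k}'$, at which the balance relation $\sum_{k}B_{k}\vec u_{0'k'}=\vec 0$ holds; and by strict convexity this is the only interior point where the gradient of $f'$ vanishes, equivalently the only solution of $\sum_{k}B_{k}\vec u_{Xk'}=\vec 0$. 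Since $A_{0}$ solves this equation, $A_{0}=A_{0}'$, which is the assertion.

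The step I expect to be the main obstacle is the direction bookkeeping in the second paragraph. One has to be sure the deformation never moves an $A_{k}'$ strictly past $A_{0}$ along the ray --- otherwise $t_{k}>1$ and $\vec u_{0k'}=-\vec u_{0k}$, which wrecks the balance relation --- and one has to exclude the coincidence $A_{k}'=A_{0}$, where $f'$ fails to be differentiable at $A_{0}$. These are exactly the configurations the hypotheses are meant to forbid: reading ``$A_{k}'$ lies on the ray $A_{k}A_{0}$'' as the plastic contraction of $A_{k}$ toward $A_{0}$ keeps $0\le t_{k}\le 1$, while the strict floating inequalities for $A_{1}'A_{2}'A_{3}'A_{4}'$ force $A_{0}'$ --- hence the point we identify it with --- off the vertices, ruling out $t_{k}=1$. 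With these points dispatched the clean transfer above goes through; the remaining ingredient, strict convexity and thus uniqueness of the interior critical point of a positively weighted sum of Euclidean distances to non-collinear points, is standard and I would just cite it.
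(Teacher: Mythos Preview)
The paper does not supply its own proof of this lemma; it is quoted verbatim from the cited references \cite{Zachos/Zou:88} and \cite{Zachos:14}, so there is nothing in the present paper to compare your argument against.

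That said, your argument is the standard one and is correct. The whole content of the geometric plasticity principle is precisely your second paragraph: the equilibrium relation \eqref{floatingequlcond} involves only the unit vectors $\vec u_{0k}$, and sliding $A_{k}$ to $A_{k}'$ along the ray through $A_{0}$ (on the same side of $A_{0}$) leaves $\vec u_{0k'}=\vec u_{0k}$; hence $A_{0}$ satisfies the balance condition for the new configuration, and the uniqueness in Theorem~\ref{theor1}(a) forces $A_{0}'=A_{0}$. The direction issue you flag is exactly the right caveat: the lemma is true under the reading that each $A_{k}'$ remains on the $A_{k}$-side of $A_{0}$, and that is how the paper uses it (in the proof of Theorem~3 the vertices are slid along the rays from $A_{0}$ to form a square with the same Fermat--Torricelli point). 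Your appeal to strict convexity to identify the unique interior stationary point is also the correct way to close the argument.
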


\begin{theorem}

The weighted Torricelli-Engelbrecht solution for
$A_{1}A_{2}A_{3}A_{4}$ is given by the following system of four
equations w.r. to the variables $\alpha_{102},$ $\alpha_{203},$
$\alpha_{304}$ and $\alpha_{401}:$
\begin{eqnarray}\label{first}
& \csc^{2}\alpha_{102} \csc^{2}\alpha _{304} \csc^{2}\alpha _{401}
\left(\cos\alpha_{102}-\sin\alpha_{102}\right) \left(\cos\alpha
_{304}-\sin\alpha
_{304}\right)\nonumber \\
{}& \left(\cos\left(\alpha_{102}-\alpha
_{304}\right)-\cos\left(\alpha_{102}+\alpha _{304}+2 \alpha
_{401}\right)-2 \sin\left(\alpha_{102}+\alpha
_{304}\right)\right)=0,
\end{eqnarray}
\begin{eqnarray}\label{second}
-B_1^2-2B_1 B_2\cos\alpha _{102}-B_2^2+B_3^2-2B_1 B_4 \cos\alpha
_{401} -2B_2 B_4 \cos\left(\alpha _{102}+\alpha _{401}\right)
-B_4^2=0,
\end{eqnarray}
\begin{eqnarray}\label{third}
\alpha_{304}=\arccos{\left(\frac{B_1^2+2B_1 B_2\cos\alpha
_{102}+B_2^2-B_3^2-B_4^2}{2 B_3 B_4}\right)}
\end{eqnarray}
and
\begin{eqnarray}\label{fourth}
\alpha_{203}=2\pi-\alpha_{102}-\alpha_{304}-\alpha_{401}.
\end{eqnarray}

\end{theorem}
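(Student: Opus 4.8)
The plan is to reduce the four-force balance at the weighted Fermat--Torricelli point $A_{0}$ to three-force balances governed by the weighted Torricelli--Engelbrecht formula of Lemma~\ref{trianglesolveFT}, and to read off from the reduction exactly the four relations \eqref{first}--\eqref{fourth}.

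First I would assemble the data supplied by Theorem~\ref{theor1}: in the weighted floating case $A_{0}$ exists, is unique, lies in the interior of $A_{1}A_{2}A_{3}A_{4}$, and satisfies $\sum_{i=1}^{4}B_{i}\vec U_{0i}=\vec 0$. Since $A_{0}$ is interior and the rays $A_{0}A_{1},A_{0}A_{2},A_{0}A_{3},A_{0}A_{4}$ occur in this cyclic order, the four angles at $A_{0}$ fill a complete turn, which is \eqref{fourth} verbatim; everything else is a repackaging of the single vector identity $\sum_{i}B_{i}\vec U_{0i}=\vec 0$ into \eqref{first}--\eqref{third}.

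For \eqref{third} I would split the equilibrium as $B_{1}\vec U_{01}+B_{2}\vec U_{02}=-(B_{3}\vec U_{03}+B_{4}\vec U_{04})$. Replacing $\{A_{1},A_{2}\}$ by one effective vertex carrying weight $\|B_{1}\vec U_{01}+B_{2}\vec U_{02}\|$ along the resultant direction turns the balance at $A_{0}$ into a weighted three-force balance among that effective vertex, $A_{3}$ and $A_{4}$, so Lemma~\ref{trianglesolveFT} applies and gives $\alpha_{304}=\arccos\!\big((\|B_{1}\vec U_{01}+B_{2}\vec U_{02}\|^{2}-B_{3}^{2}-B_{4}^{2})/(2B_{3}B_{4})\big)$; expanding the squared norm with $\vec U_{01}\!\cdot\!\vec U_{02}=\cos\alpha_{102}$ yields \eqref{third} (equivalently, simply equate the squared norms of the two sides of the split). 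For \eqref{second} I would instead write $B_{1}\vec U_{01}+B_{2}\vec U_{02}+B_{4}\vec U_{04}=-B_{3}\vec U_{03}$, so that the left side has length $B_{3}$, and equate squared norms; the only dot product appearing there that does not involve two cyclically adjacent rays is $\vec U_{02}\!\cdot\!\vec U_{04}=\cos(\alpha_{102}+\alpha_{401})$, since $\angle A_{2}A_{0}A_{4}$ equals $\alpha_{102}+\alpha_{401}$ up to its $2\pi$-complement, and this produces \eqref{second} after rearrangement. Throughout I would invoke Lemma~\ref{geomplasticityR2} to ensure that the reduced three-force subproblems stay in the floating regime, so that Lemma~\ref{trianglesolveFT} is legitimately applicable.

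Equation \eqref{first} is the genuinely delicate step, and I expect it to be the main obstacle. The norm identities above pin down only the magnitudes of $B_{1}\vec U_{01}+B_{2}\vec U_{02}$ and of $B_{3}\vec U_{03}+B_{4}\vec U_{04}$ (together with the fact that the resultant of three of the four forces has length $B_{3}$); they do not yet record that $B_{1}\vec U_{01}+B_{2}\vec U_{02}$ and $B_{3}\vec U_{03}+B_{4}\vec U_{04}$ point in opposite directions, which is the remaining content of the vector equation. To capture it I would place $A_{0}$ at the origin with $\vec U_{01}$ along the positive $x$-axis, use the law of sines in the force triangles of $B_{1}\vec U_{01},B_{2}\vec U_{02}$ and of $B_{3}\vec U_{03},B_{4}\vec U_{04}$ to express the polar angle of each resultant in terms of the $B_{i}$ and of $\alpha_{102},\alpha_{304},\alpha_{401}$, impose that these polar angles differ by $\pi$, and eliminate $B_{1},B_{2},B_{3},B_{4}$ from the resulting scalar relation using \eqref{second}--\eqref{third}; condensing the result with the sum-to-product identities (such as $\cos X-\cos Y=2\sin\tfrac{X+Y}{2}\sin\tfrac{Y-X}{2}$) should collapse it to \eqref{first}. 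The hard part is precisely this elimination and simplification: pushing the weights out cleanly, tracking the sign ambiguities introduced by the two norm-squarings \eqref{second},\eqref{third} and discarding the extraneous roots --- which is where the $\csc^{2}$ and $(\cos-\sin)$ factors in \eqref{first} originate --- and identifying the final trigonometric normal form. To close the argument I would then run the steps backwards: from any quadruple satisfying \eqref{first}--\eqref{fourth} together with the floating-case inequalities, reconstruct $\sum_{i}B_{i}\vec U_{0i}=\vec 0$ with the rays arranged consistently around $A_{0}$ by \eqref{fourth}, after which Theorem~\ref{theor1}(a) identifies $A_{0}$ as the unique weighted Fermat--Torricelli point and Lemma~\ref{geomplasticityR2} certifies its stability, so that \eqref{first}--\eqref{fourth} indeed characterize the weighted Torricelli--Engelbrecht solution for $A_{1}A_{2}A_{3}A_{4}$.
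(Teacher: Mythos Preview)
Your derivations of \eqref{second}, \eqref{third} and \eqref{fourth} agree with the paper's: both square the balance $\sum_i B_i\vec U_{0i}=\vec 0$ along two different splittings and read off the angle sum. The divergence, and the genuine gap, is at \eqref{first}.

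You treat \eqref{first} as the residual information from the force balance --- the antiparallelity of the two partial resultants --- and propose to extract it by eliminating $B_{1},\dots,B_{4}$ from that direction condition together with \eqref{second}--\eqref{third}. This cannot succeed. The balance is only two scalar equations, and for \emph{any} triple with each $\alpha_{i0j}\in(0,\pi)$ the four unit vectors positively span $\mathbb{R}^{2}$, so positive weights solving the balance always exist; eliminating the $B_i$ from the balance therefore imposes no nontrivial relation on the angles. Concretely, $\alpha_{102}=\alpha_{304}=2\pi/3$, $\alpha_{401}=\pi/3$ is balanced by $B_1=B_3$, $B_2=B_4$, yet the nontrivial factor in \eqref{first} evaluates to $\sqrt{3}\neq 0$. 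So \eqref{first} is not encoded in the force balance at all, and no amount of weight-elimination from your direction condition will produce it.

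What the paper actually does is geometric, and it uses Lemma~\ref{geomplasticityR2} in a way you missed. The plasticity principle is not invoked ``to ensure that the reduced three-force subproblems stay in the floating regime''; it is used to slide the four vertices along the rays $A_0A_i$ until the boundary becomes a \emph{square} of side $a$, while $A_0$ and the angles $\alpha_{i0j}$ remain unchanged. In Cartesian coordinates on that square one writes the three inscribed-angle circles through $\{A_1,A_2,A_0\}$, $\{A_1,A_4,A_0\}$ and $\{A_3,A_4,A_0\}$, solves two of them for the coordinates $(x,y)$ of $A_0$, and substitutes into the third; the resulting compatibility condition for the three circles to share a point is exactly \eqref{first}. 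Thus \eqref{first} carries the geometry of the (square) quadrilateral, while \eqref{second}--\eqref{third} carry the weights via the balance; your plan collapses both into the balance and so cannot recover \eqref{first}.
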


\begin{proof}

Assume that we select $B_{1},$ $B_{2},$ $B_{3},$ $B_{4},$ such
that $A_{0}$ is an interior point of $\triangle A_{1}OA_{2}.$ By
applying the geometric plasticity principle of
Lemma~\ref{geomplasticityR2} we could choose a transformation of
$A_{1}A_{2}A_{3}A_{4}$ to the square
$A_{1}^{\prime}A_{2}^{\prime}A_{3}^{\prime}A_{4}^{\prime},$ where
$A_{0}^{\prime}=A_{0}$ and $A_{O}$ is an interior point of
$A_{1}^{\prime}O^{\prime}A_{2}^{\prime}$ where $O^{\prime}$ is the
intersection of $A_{1}^{\prime}A_{3}^{\prime}$ and
$A_{2}^{\prime}A_{4}^{\prime}.$

We consider the equations of the three circles which pass through
$A_{1},$ $A_{2},$ $A_{0},$ $A_{1},$ $A_{4},$ $A_{0}$ and $A_{3},$
$A_{4},$ $A_{0},$ respectively, which meet at $A_{0}:$

\begin{eqnarray}\label{circle1}
\left(x-\frac{a}{2}\right)^2+\left(y-\frac{1}{2} a \cot\alpha
_{102}\right)^2=\frac{1}{4} a^2 \csc^{2}\alpha _{102}
\end{eqnarray}

\begin{eqnarray}\label{circle2}
\left(y-\frac{a}{2}\right)^2+\left(x-\frac{1}{2} a \cot\alpha
_{401}\right)^2=\frac{1}{4} a^2 \csc^{2}\alpha _{401}
\end{eqnarray}

\begin{eqnarray}\label{circle3}
\left(x-\frac{a}{2}\right)^2+\left(y-(a-\frac{1}{2} a \cot\alpha
_{304})\right)^2=\frac{1}{4} a^2 \csc^{2}\alpha _{304}
\end{eqnarray}

By subtracting (\ref{circle2}) from (\ref{circle1}),
(\ref{circle3}) from (\ref{circle1}) and solving w.r. to $x, y$ we
get:

\begin{eqnarray}\label{countx}
x=\frac{a(-1+\cot\alpha_{102})(-1+\cot\alpha_{304})}{(-2+\cot\alpha_{102}+\cot\alpha_{304})
(-1+\cot\alpha_{401})}
\end{eqnarray}
and
\begin{eqnarray}\label{county}
y=-\frac{-a
\cot\alpha_{304}+a}{\cot\alpha_{102}+\cot\alpha_{304}-2}
\end{eqnarray}

By substituting (\ref{countx}) and (\ref{county}) in
(\ref{circle1}), we obtain (\ref{first}).

Taking into account the weighted floating equilibrium condition,
we get:

\begin{eqnarray}\label{vectorbalance1}
-B_{3} \vec{u}_{30}=B_{1} \vec{u}_{10}+B_{2} \vec{u}_{20}+B_{4}
\vec{u}_{40}
\end{eqnarray}

or

\begin{eqnarray}\label{vectorbalance2}
B_{1} \vec{u}_{10}+B_{2} \vec{u}_{20}=-B_{3}
\vec{u}_{30}-B_{4}\vec{u}_{40}.
\end{eqnarray}

By squaring both parts of (\ref{vectorbalance1}) we derive
(\ref{second}) and by squaring both parts of
(\ref{vectorbalance2}) we derive (\ref{third}).

\end{proof}

%------------------------------------------------------------------------------------
\begin{remark}\label{rem1}
A different approach was used in \cite[Solution~2.2,Example~2.4,
p.~413-414]{Zachos/Zou:88}, in order to derive a similar system of
equations w.r. to $\alpha_{401}$ and $\alpha_{102}.$
\end{remark}

\begin{example}\label{exam1}
By substituting $B_{1}=3.5,$ $B_{2}=2.5,$ $B_{3}=2,$ $B_{4}=1,$
$a=10$ in (\ref{first}) and (\ref{second}) and solving this system
of equations numerically by using for instance Newton method and
choosing as initial values $\alpha_{102}^{o}=2.7$ rad,
$\alpha_{401}^{o}=1.2$ rad, we obtain $\alpha_{102}=2.30886$ and
$\alpha_{401}=1.57801$ rad. By substituting $\alpha_{102}=2.30886$
and $\alpha_{401}=1.57801$ rad in (\ref{third}) we get
$\alpha_{304}=1.12492$ rad. From (\ref{fourth}), we get
$\alpha_{203}=1.2714$ rad. By substituting the angles
$\alpha_{i0j}$ in (\ref{countx}) and (\ref{county}), we derive
$x=4.0700893$ and $y=2.146831.$
\end{example}

\begin{theorem}\label{nonexist4FTsol}
There does not exist an analytical solution for the 4wFT problem
in $\mathbb{R}^{2}.$
\end{theorem}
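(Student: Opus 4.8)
The plan is to convert the statement into an \emph{arithmetic} one and settle it by Galois theory: I will exhibit a single convex quadrilateral with rational vertex coordinates and rational weights whose $4$wFT point $A_{0}$ has a coordinate that is algebraic over $\mathbb{Q}$ of degree at least five with non-solvable Galois group. Since an ``analytical solution'' of the $4$wFT problem would be a formula expressing the coordinates of $A_{0}$ by field operations and radicals in the data $B_{1},B_{2},B_{3},B_{4}$ and in the vertex coordinates, and here the data are rational, such a formula would express that coordinate in radicals over $\mathbb{Q}$, contradicting Abel--Ruffini. Note that the square $A_{1}^{\prime}A_{2}^{\prime}A_{3}^{\prime}A_{4}^{\prime}$ of the previous proof, taken with rational side $a$ and rational weights, is already a legitimate such quadrilateral (a square is convex, and $A_{0}$ is its $4$wFT point), so no genericity of configuration is required.

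First I would reduce the characterizing system (\ref{first})--(\ref{fourth}) to one polynomial equation in one variable over $k=\mathbb{Q}(B_{1},B_{2},B_{3},B_{4},a)$. Using (\ref{fourth}) to remove $\alpha_{203}$ and (\ref{third}) to express $\cos\alpha_{304}$ as the stated rational function of $\cos\alpha_{102}$, and observing that $\csc^{2}\alpha_{102}\csc^{2}\alpha_{304}\csc^{2}\alpha_{401}\neq 0$ while generically $\cos\alpha_{102}\neq\sin\alpha_{102}$ and $\cos\alpha_{304}\neq\sin\alpha_{304}$ at an interior floating point, equation (\ref{first}) is equivalent there to the vanishing of its last trigonometric factor. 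Introducing the half-angle substitutions $\cos\alpha_{102}=(1-s^{2})/(1+s^{2})$, $\sin\alpha_{102}=2s/(1+s^{2})$ and the analogous ones for $\alpha_{401}$ in a variable $t$, clearing denominators, and squaring once to rationalize the lone $\sin\alpha_{304}=\sqrt{1-\cos^{2}\alpha_{304}}$, the last factor of (\ref{first}) and equation (\ref{second}) become polynomials $F(s,t),G(s,t)\in k[s,t]$. Taking the resultant $P(s)=\mathrm{Res}_{t}\bigl(F,G\bigr)\in k[s]$ and discarding the spurious factors produced by the squaring and by the substitution of (\ref{third}) leaves a polynomial whose roots encode the sought angle $\alpha_{102}$; the coordinates of $A_{0}$ are then the rational functions of $s,t$ given by (\ref{countx})--(\ref{county}).

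For the specialization I would take the data of Example~\ref{exam1}, namely $B_{1}=7/2$, $B_{2}=5/2$, $B_{3}=2$, $B_{4}=1$, $a=10$ --- these satisfy the floating inequalities (\ref{floatingcase}), so Theorem~\ref{theor1} and Lemma~\ref{geomplasticityR2} apply --- and compute $P(s)\in\mathbb{Q}[s]$ explicitly, replacing this by any nearby rational choice if need be. The remaining step is a computer-algebra certification that, after removing rational factors, $P$ has an irreducible factor of degree at least five whose Galois group over $\mathbb{Q}$ is not solvable (in practice one expects $S_{n}$): for instance by Dedekind's theorem, factoring $P$ modulo several primes to exhibit, among the Frobenius elements, a transposition together with a $p$-cycle and invoking the standard transitivity/primitivity criteria, or simply by a direct Galois-group computation. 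Since $A_{0}$ then has a coordinate lying in no radical extension of $\mathbb{Q}$, the theorem follows.

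The principal obstacle is precisely this last certification together with the bookkeeping that precedes it. One must make sure that the eliminations --- the resultant, the squaring that rationalizes $\sin\alpha_{304}$, and the insertion of (\ref{third}) --- have not hidden the geometrically meaningful root inside an extraneous low-degree factor, so that the high-degree irreducible factor of $P$ genuinely carries the Fermat--Torricelli point of the chosen square rather than a phantom common point of the three circles (\ref{circle1})--(\ref{circle3}); the clean safeguard is to locate the relevant real root numerically, as in Example~\ref{exam1}, and verify it satisfies that factor. Once this is settled, non-solvability of the Galois group is a finite if tedious verification, and the geometry plays no further role beyond having supplied the system (\ref{first})--(\ref{fourth}).
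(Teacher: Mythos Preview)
Your approach is entirely different from the paper's, and far more ambitious. The paper's proof consists of a single assertion that the system (\ref{first})--(\ref{second}), with (\ref{third}) substituted, ``cannot be solved explicitly'' with respect to $\alpha_{102}$ and $\alpha_{401}$, and then concludes via (\ref{countx})--(\ref{county}). No argument is offered for this claim; it is treated as self-evident from the visual complexity of the equations. Your plan --- specialize to rational data, eliminate to a univariate polynomial over $\mathbb{Q}$, and certify a non-solvable Galois group --- is the standard rigorous route for statements of this type (in the spirit of Bajaj's work on the algebraic degree of Fermat--Weber problems), and if carried through it would prove the theorem in the precise sense of unsolvability by radicals, which the paper never attempts.

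That said, what you have written is a programme, not a proof. The resultant $P(s)$ is never computed, no irreducible factor is exhibited, and no Galois-group certification is performed; you yourself flag the elimination bookkeeping (spurious factors from squaring, from the half-angle substitution, and from inserting (\ref{third})) and the need to pin the geometric root to the high-degree factor as the principal obstacle. Until that computation is actually carried out for the data of Example~\ref{exam1} (or some other rational choice) and the relevant factor is shown to have degree at least five with, say, $S_{n}$ as Galois group, the argument has a genuine gap at exactly the point where the mathematical content lies. The paper's argument, by contrast, is not so much incomplete as absent: it provides no obstruction beyond the appearance of the equations, so your outline --- even unfinished --- is already closer to a proof than what the paper gives.
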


\begin{proof}

The system of the two equations (\ref{first}) and (\ref{second})
taking into account (\ref{third}) cannot be solved explicitly w.r
to $\alpha_{102}$ and $\alpha_{401}.$ Therefore, by considering
(\ref{countx}) and (\ref{county}) we deduce that the location of
the 4wFT point $A_{0}$ cannot also be expressed explicitly via the
angles $\alpha_{i0j},$ for $i,j=1,2,3,4,$ for $i\ne j.$
\end{proof}

Thus, from Theorem~\ref{nonexist4FTsol} the position of a weighted
Fermat-Torricelli tree of degree four cannot be expressed
analytically and may be found by using numerical methods (see also
in \cite{Zachos/Zou:88}).

\section{An absorbing generalized Gauss-Menger tree in $\mathbb{R}^{2}$}

Let $A_{1}A_{2}A_{3}A_{4}$ be a boundary weighted convex
quadrilateral of a generalized Gauss tree of degree three in
$\mathbb{R}^{2}$ and $A_{0},$ $A_{0^{\prime}}$ are the two
weighted Fermat-Torricelli (3wFT) points of degree three which are
located at the convex hull of the boundary quadrilateral.

We denote by $l\equiv \|A_{0}-A_{0^{\prime}}\|,$ $a_{ij}\equiv
\|A_{i}-A_{j}\|,$ $\alpha_{ijk}\equiv \angle A_{i}A_{j}A_{k},$
$a_{10}\equiv a_{1},$ $a_{40}\equiv a_{4},$ $a_{20^{\prime}}\equiv
a_{2},$ $a_{30^{\prime}}\equiv a_{3}$ (Fig.~\ref{fig1n1}) and by
$B_{i}\equiv \frac{B_{i}^{\prime}}{\sum_{i=1}^{4}B_{i}^{\prime}},$
for $i, j, k\in\{0,0^{\prime},1,2,3,4\}$ and $i\ne j\ne k.$

\begin{figure}
\centering
\includegraphics[scale=0.75]{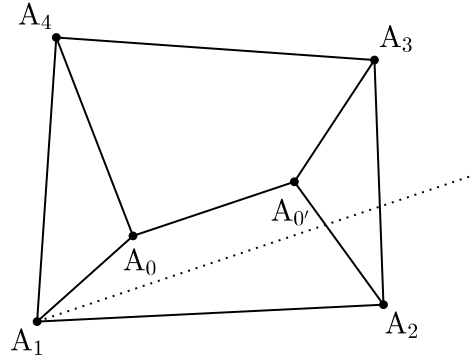}
\caption{A generalized Gauss Menger tree of degree three regarding
a boundary convex quadrilateral}\label{fig1n1}
\end{figure}

We proceed by giving the following two lemmas which have been
proved recently in \cite[Theorem~1]{Zachos:98} and
\cite{Zachos/Zou:13}.

\begin{lemma}{\cite[Theorem~1]{Zachos:98},\cite[Theorem~2.1,p.~485]{Zachos/Zou:13}}\label{FT K quadrilateral}

A generalized Gauss tree of degree three (full weighted Steiner
minimal tree) of $A_{1}A_{2}A_{3}A_{4}$ consists of two weighted
Fermat-Torricelli points $A_{0},$ $A_{0}^{\prime}$ which are
located at the interior convex domain with corresponding given
weights $B_{0},$ $B_{0^{\prime}}$ and minimizes the objective
function:
\begin{equation} \label{eq:B_1}
B_{1}a_{1}+B_{2}a_{2}+B_{3}a_{3}+B_{4}a_{4}+\frac{B_{0}+B_{0^{\prime}}}{2}l\to
min,
\end{equation}
such that:
\begin{equation}\label{ineq1}
|B_{i}-B_{j}|<B_{k}<B_{i}+B_{j},
\end{equation}
\begin{equation}\label{ineq2}
|B_{t}-B_{m}|<B_{n}<B_{t}+B_{m}
\end{equation}

where \[B_{00^{\prime}}\equiv\frac{B_{0}+B_{0^{\prime}}}{2},\]

for $i,j,k\in\{1,4,00^{\prime}\},$ $t,m,n\in\{2,3,00^{\prime}\}$
and $i\ne j\ne k,$ $t\ne m\ne n,$
\end{lemma}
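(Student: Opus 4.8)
The plan is to read the structure of a generalized Gauss tree of degree three off its defining topology together with the first-order optimality conditions at its two interior vertices. First I would recall that, by Definitions~\ref{Steinertopology} and~\ref{Gaussrtree3}, every competitor in the generalized Gauss problem has four boundary vertices of degree one and exactly two interior vertices $A_{0},A_{0^{\prime}}$ of degree three; hence each of $A_{0},A_{0^{\prime}}$ is incident to the connecting edge $A_{0}A_{0^{\prime}}$ and to two of the four terminals. Using the convexity of $A_{1}A_{2}A_{3}A_{4}$, the only pairings of the terminals that can be embedded in the plane without edge crossings are $\{A_{1},A_{2}\},\{A_{3},A_{4}\}$ and $\{A_{1},A_{4}\},\{A_{2},A_{3}\}$ (the pairing $\{A_{1},A_{3}\},\{A_{2},A_{4}\}$ forces a crossing of pendant edges inside the convex hull). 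After relabelling we may take $A_{0}$ joined to $A_{1},A_{4}$ and $A_{0^{\prime}}$ to $A_{2},A_{3}$, so the tree is the union of segments of lengths $a_{1},a_{4},l,a_{2},a_{3}$ and its weighted length is $B_{1}a_{1}+B_{4}a_{4}+B_{2}a_{2}+B_{3}a_{3}+B_{00^{\prime}}l$, where $B_{00^{\prime}}$ is the effective weight carried by the connecting edge; in the P\'olya-Varignon (mechanical) formulation of the generalized Gauss problem (Problem~\ref{genGaussproblem}, with $x_{G}=B_{00^{\prime}}$) this edge transmits to each of its endpoints the half-sum $\frac{B_{0}+B_{0^{\prime}}}{2}$ of the two Fermat-Torricelli weights, which yields the objective~(\ref{eq:B_1}).

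Next I would locate the two interior vertices and establish existence and uniqueness. As a sum of weighted Euclidean distances, the weighted length is a convex function of $(A_{0},A_{0^{\prime}})$, strictly convex away from the degenerate set on which an edge collapses; hence the fixed-topology minimum is attained and, when it is interior, unique, and at an interior (floating) minimiser the gradient conditions are precisely the two weighted equilibrium relations
\begin{equation*}
B_{1}\vec{u}_{01}+B_{4}\vec{u}_{04}+B_{00^{\prime}}\vec{u}_{00^{\prime}}=\vec{0},\qquad
B_{2}\vec{u}_{0^{\prime}2}+B_{3}\vec{u}_{0^{\prime}3}+B_{00^{\prime}}\vec{u}_{0^{\prime}0}=\vec{0}.
\end{equation*}
Thus $A_{0}$ is the weighted Fermat-Torricelli point of $\{A_{1},A_{4},A_{0^{\prime}}\}$ with weights $\{B_{1},B_{4},B_{00^{\prime}}\}$ and, symmetrically, $A_{0^{\prime}}$ that of $\{A_{2},A_{3},A_{0}\}$ with weights $\{B_{2},B_{3},B_{00^{\prime}}\}$; in particular each interior vertex has degree three and lies in the interior convex domain.

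It then remains to identify the floating regime with the stated triangle inequalities. Applying Lemma~\ref{trianglesolveFT} at $A_{0}$, the angle $\angle A_{1}A_{0}A_{4}$ of such a weighted Fermat-Torricelli configuration equals $\arccos\!\big((B_{00^{\prime}}^{2}-B_{1}^{2}-B_{4}^{2})/(2B_{1}B_{4})\big)$, and this expression defines a genuine interior vertex exactly when its argument lies in $(-1,1)$, i.e. when $\abs{B_{1}-B_{4}}<B_{00^{\prime}}<B_{1}+B_{4}$, which is~(\ref{ineq1}); the same computation at $A_{0^{\prime}}$ gives~(\ref{ineq2}). Conversely, when~(\ref{ineq1}) and~(\ref{ineq2}) hold the three weighted angles at each Steiner point are mutually consistent (they add up to $2\pi$), so a weighted Melzak-Simpson construction produces a full configuration with $A_{0},A_{0^{\prime}}$ in the open convex hull, and the strictly convex minimisation above selects it as the unique minimal full tree. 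The step I expect to be the main obstacle is the careful handling of the degenerate and absorbed boundary cases---a Steiner point sliding onto a terminal, or $l\to 0$ with $A_{0}$ and $A_{0^{\prime}}$ merging into a single $4$wFT point (the degree-four tree)---which is exactly where the floating-versus-absorbed dichotomy of Theorem~\ref{theor1} is invoked and where the hypotheses~(\ref{ineq1}) and~(\ref{ineq2}) keep us in the full-tree regime.
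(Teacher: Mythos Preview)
The paper does not give its own proof of this lemma: it is imported verbatim from \cite[Theorem~1]{Zachos:98} and \cite[Theorem~2.1]{Zachos/Zou:13} (the paper says so just before stating it), and is only \emph{used} downstream, in Theorem~\ref{DependenceGaussmenger} and in the proof of Theorem~\ref{nounivconstant}, through the angle formulas (\ref{ft01})--(\ref{ft023}). So there is no in-paper argument to compare against.

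That said, your outline is the natural one and matches how the cited sources proceed: fix the degree-three Gauss topology, minimise the convex weighted-length functional in the two free vertices, read off the two three-term balance conditions at $A_{0}$ and $A_{0^{\prime}}$, and then use Lemma~\ref{trianglesolveFT} to translate the floating regime into the triangle inequalities (\ref{ineq1})--(\ref{ineq2}). This is exactly what the present paper later relies on when it writes (\ref{ft01})--(\ref{ft023}) and again (\ref{ft01bar})--(\ref{ft023bar}).

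Two small points where your write-up could be tightened. First, the identity $B_{00^{\prime}}=\tfrac{1}{2}(B_{0}+B_{0^{\prime}})$ is a \emph{definition} in the lemma (and coincides with $x_{G}$ in Problem~\ref{genGaussproblem}); it is not something you need to derive from a P\'olya--Varignon mechanical model, so that paragraph is decoration rather than argument. Second, the convexity/uniqueness step is correct in spirit but the order of quantifiers matters: the weighted length is convex everywhere and strictly convex only off the collinearity locus, so uniqueness of a \emph{floating} minimiser follows once you already know the minimiser is interior; that, in turn, is precisely what (\ref{ineq1})--(\ref{ineq2}) guarantee via Theorem~\ref{theor1}(b) applied at each of the two three-point subproblems. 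Making this dependency explicit would close the loop cleanly.
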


Suppose that $B_{1},$ $B_{2},$ $B_{3},$ $B_{4},$ $B_{00^{\prime}}$
satisfy the inequalities (\ref{ineq1}) and (\ref{ineq2}).

\begin{lemma}{\cite[Theorem~2.2, p.~486]{Zachos/Zou:13}}\label{explicitsolution}
The location of $A_{0}$ and $A_{0^{\prime}}$ is given by the
relations:

\begin{equation}\label{base1}
\cot{\varphi}=\frac{B_{0}
a_{12}+B_{4}a_{14}\cos(\alpha_{214}-\alpha_{400^{\prime}})+B_{3}a_{23}\cos(\alpha_{123}-\alpha_{30^{\prime}0})}{B_{4}a_{14}\sin(\alpha_{214}-\alpha_{400^{\prime}})-B_{3}a_{23}\sin(\alpha_{123}-\alpha_{30^{\prime}0})},
\end{equation}

\begin{equation}\label{a1solvesystem}
a_{1}=\frac{a_{14}\sin(\alpha_{214}-\varphi-\alpha_{400^{\prime}})}{\sin(\alpha_{100^{\prime}}+\alpha_{400^{\prime}})}
\end{equation}

and

\begin{equation}\label{a2solvesystem}
a_{2}=\frac{a_{23}\sin(\alpha_{123}+\varphi-\alpha_{300^{\prime}})}{\sin(\alpha_{20^{\prime}0}+\alpha_{30^{\prime}0})}
\end{equation}

 where $\varphi$ is the angle which is formed between the
line defined by $A_{1}$ and $A_{2}$ and the line which passes from
$A_{1}$ and it is parallel to the line defined
 by $A_{0}$ and $A_{0}^{\prime}.$
\end{lemma}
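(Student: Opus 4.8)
The plan is to establish Lemma~\ref{explicitsolution} by setting up a local coordinate frame anchored at $A_{1}$ and writing the positions of $A_{0}$ and $A_{0^{\prime}}$ in terms of the unknown directions of the two tree edges $A_{1}A_{0}$ and $A_{2}A_{0^{\prime}}$, using the $120^{\circ}$-type angular conditions at the Fermat--Torricelli points together with the weighted equilibrium relations. First I would fix the line through $A_{1}$ parallel to $A_{0}A_{0^{\prime}}$ and let $\varphi$ be the angle it makes with $A_{1}A_{2}$, as in the statement; this is the natural single parameter, because once $\varphi$ is known, the rays $A_{1}A_{0}$ and $A_{2}A_{0^{\prime}}$ are determined (their angles with $A_{1}A_{2}$ are $\alpha_{100^{\prime}}$ and $\alpha_{20^{\prime}0}$, which are themselves fixed functions of the weights via Lemma~\ref{trianglesolveFT} applied at $A_{0}$ with the triple $B_{1},B_{4},B_{00^{\prime}}$ and at $A_{0^{\prime}}$ with $B_{2},B_{3},B_{00^{\prime}}$). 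So the first batch of steps is purely bookkeeping: write down all the angles of the full Steiner tree of degree three in terms of $B_{1},\dots,B_{4},B_{00^{\prime}}$ using the two angular conditions at the interior vertices and the fact that the edges $A_{1}A_{0}$, $A_{0}A_{0^{\prime}}$, $A_{0^{\prime}}A_{2}$ form a polygonal path whose turning angles are controlled.

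Next I would derive the two closure equations \eqref{a1solvesystem} and \eqref{a2solvesystem}. Consider the triangle $A_{1}A_{0}A_{0^{\prime}}$ extended to $A_{2}$: projecting the broken path $A_{1}\to A_{0}\to A_{0^{\prime}}\to A_{2}$ onto the directions perpendicular to, and along, $A_{0}A_{0^{\prime}}$, and using the sine rule in the triangle $A_{1}A_{0}A_{4}$ (respectively $A_{2}A_{0^{\prime}}A_{3}$) whose angles are all known, gives $a_{1}$ and $a_{2}$ as ratios of sines — the arguments $\alpha_{214}-\varphi-\alpha_{400^{\prime}}$ and $\alpha_{123}+\varphi-\alpha_{300^{\prime}}$ are exactly the residual angles at $A_{1}$ and $A_{2}$ after subtracting the fixed tree angles, and the denominators $\sin(\alpha_{100^{\prime}}+\alpha_{400^{\prime}})$, $\sin(\alpha_{20^{\prime}0}+\alpha_{30^{\prime}0})$ are the (fixed) apex angles of those triangles. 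This is essentially the law of sines bookkeeping that underlies the Torricelli--Engelbrecht construction, recycled here with one point ($A_{0}$ and $A_{0^{\prime}}$) playing the role previously played by a single Fermat point.

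The remaining and genuinely substantive step is the formula \eqref{base1} for $\cot\varphi$. Here I would impose the one global constraint not yet used, namely that the two independent triangle computations must be consistent — concretely, that the point $A_{0^{\prime}}$ obtained by walking from $A_{1}$ through $A_{0}$ along the direction fixed by $\varphi$ agrees with the point $A_{0^{\prime}}$ obtained from the $A_{2}$--side construction. Equivalently, the component of $A_{1}A_{2}$ perpendicular to $A_{0}A_{0^{\prime}}$ must equal the sum of the perpendicular components of $A_{1}A_{0}$ and $A_{0^{\prime}}A_{2}$ (the edge $A_{0}A_{0^{\prime}}$ contributing nothing perpendicular to itself). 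Writing that single scalar equation out, substituting the expressions for $a_{1}$, $a_{2}$, $a_{14}$, $a_{23}$ and expanding the sines of sums, one collects all terms with $\cos\varphi$ on one side and all terms with $\sin\varphi$ on the other; the ratio is precisely the right-hand side of \eqref{base1}, with the weights $B_{0},B_{3},B_{4}$ entering because the Engelbrecht angle relations convert the geometric lengths $a_{1},a_{2},l$ into weight ratios.

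The main obstacle I anticipate is purely computational rather than conceptual: keeping the many angles $\alpha_{214},\alpha_{400^{\prime}},\alpha_{123},\alpha_{30^{\prime}0},\alpha_{100^{\prime}},\alpha_{20^{\prime}0},\varphi$ consistently oriented (signs of turning angles depend on the convexity hypotheses \eqref{ineq1}--\eqref{ineq2}, which guarantee we are in the weighted floating case at both interior vertices), and carrying out the trigonometric expansion in \eqref{base1} without sign errors. The convexity inequalities are what make every relevant triangle non-degenerate, so all the denominators $\sin(\cdot)$ above are nonzero and the $\arccos$ in the Engelbrecht formula is well defined; I would invoke Lemma~\ref{FT K quadrilateral} to know a priori that $A_{0},A_{0^{\prime}}$ lie in the interior convex domain, which pins down the orientation choices and completes the argument.
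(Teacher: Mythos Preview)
The paper does not supply a proof of this lemma at all: it is quoted verbatim from \cite[Theorem~2.2]{Zachos/Zou:13} and used as a black box in the proofs of Theorem~\ref{DependenceGaussmenger} and Theorem~\ref{nounivconstant}. So there is no ``paper's own proof'' to compare your outline against.

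That said, your plan is the right one and matches the derivation in the cited source. Fixing the direction of $A_{0}A_{0^{\prime}}$ via $\varphi$, reading off the three edge directions at each interior vertex from Lemma~\ref{trianglesolveFT}, and then solving the triangles $A_{1}A_{4}A_{0}$ and $A_{2}A_{3}A_{0^{\prime}}$ by the sine rule is exactly how \eqref{a1solvesystem} and \eqref{a2solvesystem} arise. The single closure condition you describe (the component of the broken path $A_{1}\to A_{0}\to A_{0^{\prime}}\to A_{2}$ transverse to $A_{0}A_{0^{\prime}}$ must match that of $A_{1}A_{2}$) is the correct scalar equation for $\varphi$. One clarification worth making explicit in your write-up: the appearance of the bare weights $B_{0},B_{3},B_{4}$ as coefficients in \eqref{base1} is not an optimisation step layered on top of the closure, but comes out of the closure equation itself once you substitute \eqref{a1solvesystem}--\eqref{a2solvesystem} and use the sine relation $B_{1}:B_{4}:B_{00^{\prime}}=\sin\alpha_{400^{\prime}}:\sin\alpha_{100^{\prime}}:\sin\alpha_{104}$ (and its analogue at $A_{0^{\prime}}$) that follows from Lemma~\ref{trianglesolveFT}. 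Your remark that ``the Engelbrecht angle relations convert the geometric lengths into weight ratios'' is pointing at precisely this identity, but it would strengthen the outline to state it, since otherwise a reader may wonder whether you are silently differentiating the objective \eqref{eq:B_1} rather than imposing a purely geometric compatibility.

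Your identified obstacle (orientation bookkeeping under the floating-case hypotheses \eqref{ineq1}--\eqref{ineq2}) is real but routine; invoking Lemma~\ref{FT K quadrilateral} for interiority, as you propose, is the correct way to pin down signs.
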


\begin{definition}\label{GaussMengersolution} A generalized
Gauss-Menger tree is a solution of a generalized Gauss problem in
$\mathbb{R}^{2}$ for a boundary quadrilateral
$A_{1}A_{2}A_{3}A_{4}$ which depend on the Euclidean distances
$a_{ij}$ and the five given weights $B_{1},$ $B_{2},$ $B_{3},$
$B_{4}$ and $B_{00^{\prime}}.$
\end{definition}

By letting $B_{00^{\prime}}\equiv x_{G},$ we obtain an absorbing
generalized Gauss-Menger tree for a boundary quadrilateral.

\begin{theorem}\label{DependenceGaussmenger}
An absorbing generalized Gauss-Menger tree w.r. to a fixed convex
quadrilateral $A_{1}A_{2}A_{3}A_{4}$ depends only on the five
given weights $B_{i},$ $B_{2},$ $B_{3},$ $B_{4},$
$B_{00^{\prime}}\equiv x_{G}$ and the five given lengths $a_{12},$
$a_{23},$ $a_{34},$ $a_{41}$ and $a_{13}.$
\end{theorem}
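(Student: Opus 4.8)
The plan is to build the absorbing generalized Gauss--Menger tree explicitly out of the two closed-form descriptions already available --- the Engelbrecht angle formula of Lemma~\ref{trianglesolveFT} for the two weighted Fermat--Torricelli points of degree three, and the location formulas (\ref{base1}), (\ref{a1solvesystem}), (\ref{a2solvesystem}) of Lemma~\ref{explicitsolution} --- and then to check that \emph{every} quantity entering those formulas is a function of the ten data $B_{1},B_{2},B_{3},B_{4},x_{G}$ and $a_{12},a_{23},a_{34},a_{41},a_{13}$. For the weights this is visible on inspection of the formulas; for the lengths it comes down to a short planar reconstruction, which is where the work is.

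First I would dispose of the boundary geometry. Since $A_{1}A_{2}A_{3}A_{4}$ is convex, the diagonal $A_{1}A_{3}$ splits it into the triangles $A_{1}A_{2}A_{3}$ and $A_{1}A_{3}A_{4}$, which lie in opposite half-planes of the line $A_{1}A_{3}$. By the SSS criterion $A_{1}A_{2}A_{3}$ is determined up to congruence by $(a_{12},a_{23},a_{13})$ and $A_{1}A_{3}A_{4}$ by $(a_{13},a_{34},a_{41})$, and convexity selects which half-plane each occupies, so $A_{1}A_{2}A_{3}A_{4}$ itself is determined up to isometry by the five given lengths. In particular the only pieces of boundary data used in Lemma~\ref{explicitsolution}, namely the interior angle $\alpha_{123}=\angle A_{1}A_{2}A_{3}$ (an angle of the first triangle) and $\alpha_{214}=\angle A_{2}A_{1}A_{4}=\angle A_{2}A_{1}A_{3}+\angle A_{3}A_{1}A_{4}$ (a sum of one angle from each triangle, because $A_{1}A_{3}$ lies between the rays $A_{1}A_{2}$ and $A_{1}A_{4}$), together with the sides $a_{12},a_{14},a_{23}$, are all explicit functions of $a_{12},a_{23},a_{34},a_{41},a_{13}$; the sixth distance $a_{24}$, if ever needed, is likewise recovered by the law of cosines in $A_{1}A_{2}A_{4}$ and never has to be supplied independently. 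This is the step where the precise choice ``four sides plus the diagonal $A_{1}A_{3}$'' matters, and I expect it to be the only genuine point in the argument.

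Next I would handle the two Steiner points. Contracting the internal edge $A_{0}A_{0^{\prime}}$ to a single virtual vertex carrying the weight $B_{00^{\prime}}=x_{G}$ turns $A_{0}$ into an ordinary interior weighted Fermat--Torricelli point of a triangle-type configuration on $\{A_{1},A_{4},\text{virtual vertex}\}$ with weights $\{B_{1},B_{4},x_{G}\}$, and $A_{0^{\prime}}$ into one on $\{A_{2},A_{3},\text{virtual vertex}\}$ with weights $\{B_{2},B_{3},x_{G}\}$; this is exactly the equilibrium $B_{1}\vec{u}_{01}+B_{4}\vec{u}_{04}+x_{G}\vec{u}_{00^{\prime}}=\vec{0}$ and its mirror at $A_{0^{\prime}}$. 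The strict triangle inequalities (\ref{ineq1})--(\ref{ineq2}) among the relevant triples of weights place us in the floating case of Lemma~\ref{trianglesolveFT}, which then gives the tripod angles at $A_{0}$ --- in particular $\alpha_{100^{\prime}}$, $\alpha_{400^{\prime}}$, and $\angle A_{1}A_{0}A_{4}=\arccos{\left(\frac{x_{G}^{2}-B_{1}^{2}-B_{4}^{2}}{2B_{1}B_{4}}\right)}$ --- as $\arccos$ of rational expressions in $B_{1},B_{4},x_{G}$, and symmetrically the tripod angles at $A_{0^{\prime}}$, including $\alpha_{20^{\prime}0}$ and $\alpha_{30^{\prime}0}$, as functions of $B_{2},B_{3},x_{G}$. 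These are precisely the interior angles that occur in (\ref{base1})--(\ref{a2solvesystem}).

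Finally I would assemble the tree. Substituting the angles of the previous two steps into Lemma~\ref{explicitsolution}: formula (\ref{base1}) expresses $\cot\varphi$ as a ratio of quantities built from $B_{0}$ (which in the absorbing normalization equals $x_{G}$), $B_{3}$, $B_{4}$, the lengths $a_{12},a_{14},a_{23}$, and the already-controlled angles, so $\varphi$ is determined; then (\ref{a1solvesystem}) and (\ref{a2solvesystem}) give $a_{1}=\|A_{1}-A_{0}\|$ and $a_{2}=\|A_{2}-A_{0^{\prime}}\|$ from the same data; and placing $A_{0}$ at distance $a_{1}$ from $A_{1}$ along the ray whose direction relative to $A_{1}A_{2}$ is fixed by $\varphi$ and $\alpha_{100^{\prime}}$, and $A_{0^{\prime}}$ at distance $a_{2}$ from $A_{2}$ in the correspondingly determined direction, pins down both Steiner points --- and with them the length $l=\|A_{0}-A_{0^{\prime}}\|$, all four leaf edges, the internal edge, and the residual absorbing rate $\sum_{i=1}^{4}B_{i}-x_{G}$. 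Since at every stage the output depended only on $B_{1},B_{2},B_{3},B_{4},x_{G}$ and $a_{12},a_{23},a_{34},a_{41},a_{13}$, so does the whole absorbing generalized Gauss--Menger tree, which is the assertion; the remark that $a_{24}$, $B_{0}$ and $B_{0^{\prime}}$ never enter as independent parameters is subsumed in the reconstruction of the second paragraph and in the identification $B_{0}=B_{0^{\prime}}=x_{G}$.
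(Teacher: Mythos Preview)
Your proposal is correct and follows essentially the same route as the paper: recover the boundary angles $\alpha_{214},\alpha_{123}$ from the five side/diagonal lengths, compute the six tripod angles at $A_{0}$ and $A_{0^{\prime}}$ from the weights via Lemma~\ref{trianglesolveFT}, and then feed everything into Lemma~\ref{explicitsolution} to pin down $\varphi,a_{1},a_{2}$ and hence the whole tree. The only difference is in the first step: you reconstruct the quadrilateral by SSS congruence on the two triangles cut by the diagonal $A_{1}A_{3}$, whereas the paper invokes the planar Cayley--Menger determinant (setting $V=0$) to express the sixth pairwise distance in terms of the other five before applying the law of cosines; your argument is more elementary and arguably cleaner, but the two proofs are otherwise parallel.
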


\begin{proof}

Consider the Caley-Menger determinant which gives the volume of a
tetrahedron $A_{1}A_{2}A_{3}A_{4}$ in $\mathbb{R}^{3}.$

\begin{equation}\label{CaleyMengertetr}
288 V^{2} =\operatorname{det} \left(
\begin{array}{ccccc}
0      & a_{12}^{ 2}      & a_{13}^{ 2} & a_{14}^{ 2}  &  1  \\
a_{12}^{ 2} & 0 & a_{23}^{ 2} &a_{24}^{ 2} & 1          \\
a_{13}^{ 2} & a_{23}^{ 2} &0   &a_{34}^{ 2}  & 1         \\
a_{14}^{ 2} & a_{24}^{ 2} &a_{34}^{ 2}  & 0   & 1         \\
1 & 1 &1  & 1     &   0    \\
\end{array} \right).
\end{equation}

By letting $V=0$ in (\ref{CaleyMengertetr}), we obtain a
dependence of the six distances $a_{12},$ $a_{13},$ $a_{14},$
$a_{23},$ $a_{34}$ and $a_{24}.$ For instance, by solving a fourth
order degree equation w.r. to $a_{13}$ we derive that
$a_{13}=(a_{12},a_{14},a_{23},a_{34},a_{24}).$

By applying the cosine law in $\triangle A_{1}A_{2}A_{4}$ and
$\triangle A_{1}A_{2}A_{3}$ we get:

\begin{equation}\label{cosalapha214}
\alpha_{214}=\arccos \left(\frac{a_{12}^{ 2}+a_{14}^{ 2}-a_{24}^{
2}}{2 a_{12} a_{14} }\right),
\end{equation}

and

\begin{equation}\label{cosalapha123}
\alpha_{123}=\arccos \left(\frac{a_{12}^{ 2}+a_{23}^{ 2}-a_{13}^{
2}}{2 a_{12} a_{23} }\right).
\end{equation}

By Lemma~\ref{trianglesolveFT} and Lemma~\ref{FT K quadrilateral}
and considering that $A_{0}$ is the 3wFT point of $\triangle
A_{1}A_{4}A_{0^{\prime}}$ and $A_{0^{\prime}}$ is the 3wFT point
of $\triangle A_{2}A_{3}A_{0}$ we get:

\begin{equation}\label{ft01}
\alpha_{100^{\prime}}=\arccos\left(\frac{B_{4}^{2}-B_{1}^{2}-x_{G}^{2}}{2
B_{1}x_{G}}\right),
\end{equation}
\begin{equation}\label{ft02}
\alpha_{0^{\prime}04}=\arccos\left(\frac{B_{1}^{2}-B_{4}^{2}-x_{G}^{2}}{2
B_{4}x_{G}}\right),
\end{equation}
\begin{equation}\label{ft03}
\alpha_{104}=\arccos\left(\frac{x_{G}^{2}-B_{1}^{2}-B_{4}^{2}}{2
B_{1}B_{4}}\right),
\end{equation}

\begin{equation}\label{ft021}
\alpha_{00^{\prime}3}=\arccos\left(\frac{B_{2}^{2}-B_{3}^{2}-x_{G}^{2}}{2
B_{3}x_{G}}\right),
\end{equation}
\begin{equation}\label{ft022}
\alpha_{00^{\prime}2}=\arccos\left(\frac{B_{3}^{2}-x_{G}^{2}-B_{2}^{2}}{2
x_{G}B_{2}}\right),
\end{equation}

and
\begin{equation}\label{ft023}
\alpha_{20^{\prime}3}=\arccos\left(\frac{x_{G}^{2}-B_{2}^{2}-B_{3}^{2}}{2
B_{2}B_{3}}\right).
\end{equation}

Therefore, by replacing (\ref{ft01}), (\ref{ft02}), (\ref{ft022}),
(\ref{ft021}), (\ref{cosalapha214}), (\ref{cosalapha123}) in
(\ref{base1}), (\ref{a1solvesystem}), (\ref{a2solvesystem}) and
taking into account the dependence of the six distances $a_{ij},$
for $i,j=1,2,3,4,$ we derive that $\varphi,$ $a_{1}$ and $a_{2}$
depend only on $B_{1},$ $B_{2},$ $B_{3},$ $B_{4},$ $x_{G}$ and
$a_{12},$ $a_{13},$ $a_{23},$ $a_{34},$ $a_{24}.$

\end{proof}

%-----------------------------------------------------------------------------------------------------------------------------------

\section{The dynamic plasticity of convex quadrilaterals}

In this section, we deal with the solution of the inverse 4wFT
problem in $\mathbb{R}^{2}$ which has been introduced in
\cite{Zachos/Zou:88} and developed in \cite{Zachos:14}, in order
to obtain a new system of equations of the dynamic plasticity of
weighted quadrilaterals w.r. to the four variable weights
$(B_{i})_{1234}$ for $i=1,2,3,4,$ which cover also the case
$(B_{1})_{1234}=(B_{3})_{1234}$ and
$(B_{2})_{1234}=(B_{4})_{1234}.$

First, we start by mentioning the solution of S. Gueron and R.
Tessler (\cite[Section~4,p.~449]{Gue/Tes:02}) of the inverse 3wFT
problem for three non-collinear points in $\mathbb{R}^{2}.$ By
letting $B_{i}=0$ in the inverse 4wFT problem for convex
quadrilaterals (Problem~\ref{inv4wFT}) we derive the  inverse 3wFT
problem for s triangle.

Consider the inverse 3wFT problem for $\triangle A_{i}A_{j}A_{k}$
in $\mathbb{R}^{2}.$

\begin{lemma}{\cite[Section~4,p.~449]{Gue/Tes:02}}\label{inv3imp} The unique solution of the
inverse 3wFT problem for $\triangle A_{i}A_{j}A_{k}$ is given by

\begin{equation}\label{solinv3FT}
(\frac{B_{i}}{B_{j}})_{ijk}=\frac{\sin\alpha_{jik}}{\sin\alpha_{ijk}}.
\end{equation}

\end{lemma}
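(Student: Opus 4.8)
The plan is to recover the weights from the first-order (equilibrium) characterisation of the weighted Fermat--Torricelli point of $\triangle A_iA_jA_k$ and then to read off the weight ratios from the law of sines applied to the associated ``force triangle''. So fix $\triangle A_iA_jA_k$ and let $A_0$ be the prescribed point of the inverse problem; since a solution in \emph{positive} weights is sought, $A_0$ must lie strictly inside the triangle, and this is exactly the regime in which the angles $\alpha_{jik},\alpha_{ijk},\alpha_{ikj}$ appearing in the statement --- the angles subtended at $A_0$ by the sides $A_jA_k$, $A_iA_k$, $A_iA_j$ --- all lie in $(0,\pi)$ and sum to $2\pi$. Because $A_i,A_j,A_k$ are non-collinear, the objective $g(P)=B_i\|P-A_i\|+B_j\|P-A_j\|+B_k\|P-A_k\|$ is strictly convex and coercive for any positive weights, so it has a unique minimiser; and since $A_0$ is not a vertex, $g$ is differentiable at $A_0$, so $A_0$ is that minimiser (equivalently the weighted Fermat--Torricelli point) if and only if $\nabla g(A_0)=\vec 0$, i.e.
\[
B_i\,\vec u_{0i}+B_j\,\vec u_{0j}+B_k\,\vec u_{0k}=\vec 0 .
\]
This relation is linear and homogeneous in $(B_i,B_j,B_k)$; adjoining the normalisation $B_i+B_j+B_k=c$ turns the inverse problem into an inhomogeneous $3\times 3$ linear system, so existence and uniqueness of the solution reduce to showing that the homogeneous system has a one-dimensional, strictly positive solution ray.

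Both of those facts, together with the explicit ratio, fall out at once from the force-triangle picture: the displayed identity says that the vectors $B_i\vec u_{0i}$, $B_j\vec u_{0j}$, $B_k\vec u_{0k}$, laid tip to tail, close up into a triangle, and this triangle is non-degenerate because the three unit vectors are pairwise non-parallel (again using that $A_0$ is strictly interior). In that triangle the interior angle opposite the side of length $B_i$ equals $\pi-\alpha_{jik}$, and similarly for the others, so the law of sines gives
\[
\frac{B_i}{\sin\alpha_{jik}}=\frac{B_j}{\sin\alpha_{ijk}}=\frac{B_k}{\sin\alpha_{ikj}},
\]
all three sines being positive. This forces the ratios uniquely, hence proves one-dimensionality, and in particular yields $\left(B_i/B_j\right)_{ijk}=\sin\alpha_{jik}/\sin\alpha_{ijk}$. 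For the converse (existence), I would start from these proportions, rescale so that the weights sum to $c$, and invoke the converse of the sine rule: three vectors with the prescribed directions and with lengths proportional to the sines of the opposite angles do close up, so the equilibrium equation holds and the constructed weights --- positive since all three sines are positive --- solve the inverse problem.

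An equivalent, shorter route, available once one knows the equilibrium holds, is to feed Lemma~\ref{trianglesolveFT} directly: it gives $\cos\alpha_{jik}=\dfrac{B_i^{2}-B_j^{2}-B_k^{2}}{2B_jB_k}$, whence
\[
\sin\alpha_{jik}=\frac{\sqrt{\,2(B_i^{2}B_j^{2}+B_j^{2}B_k^{2}+B_k^{2}B_i^{2})-(B_i^{4}+B_j^{4}+B_k^{4})\,}}{2B_jB_k},
\]
where the radicand is symmetric in $i,j,k$ and positive by the weight triangle inequalities of the floating case. Dividing by the same expression for $\sin\alpha_{ijk}$ cancels the radicand and leaves exactly $B_i/B_j$, which recovers the claimed formula and shows it is consistent with Lemma~\ref{trianglesolveFT}.

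The step that needs real care rather than routine computation is the very first one --- that $\nabla g(A_0)=\vec 0$ characterises the minimiser and not merely a critical configuration --- which rests on strict convexity of $g$ (hence on the non-collinearity hypothesis) and on $A_0$ being a smooth point of $g$, i.e. not a vertex. On the synthetic side, the single genuine hypothesis to keep track of is that the prescribed $A_0$ lies strictly in the interior of $\triangle A_iA_jA_k$: that is precisely what makes the three subtended angles lie in $(0,\pi)$ (so the recovered weights are positive) and the force triangle non-degenerate (so the solution ray is one-dimensional). Once these are in place, uniqueness of the normalised weight triple is immediate from the linear algebra above.
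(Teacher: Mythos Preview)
The paper does not supply its own proof of this lemma; it is quoted verbatim from Gueron--Tessler with a citation and no \texttt{proof} environment follows. So there is no in-paper argument to compare against.

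Your argument is correct and is in fact the classical one: the equilibrium condition $B_i\vec u_{0i}+B_j\vec u_{0j}+B_k\vec u_{0k}=\vec 0$ together with the law of sines in the force triangle gives the ratio, and the normalisation $B_i+B_j+B_k=c$ pins down the unique triple. Your alternative derivation via Lemma~\ref{trianglesolveFT} is also valid and is a pleasant consistency check. Two minor remarks. First, you read $\alpha_{jik}$ as the angle $\angle A_jA_0A_k$ at the Fermat--Torricelli point; this is the only interpretation under which the formula is true, but note that elsewhere in the paper the convention $\alpha_{pqr}=\angle A_pA_qA_r$ places the vertex at the \emph{middle} index, so literally $\alpha_{jik}$ would be the fixed vertex angle at $A_i$ --- a reading that makes the statement false. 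Your interpretation is the intended one, and it is worth flagging the notational slip. Second, your appeal to strict convexity to justify that $\nabla g(A_0)=\vec 0$ characterises the minimiser is exactly right, and so is your observation that strict interiority of $A_0$ is what guarantees positivity of the recovered weights and non-degeneracy of the force triangle; these are the only substantive hypotheses, and you have identified them correctly.
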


\begin{definition}{\cite{Zachos:14}} We call dynamic plasticity of a weighted Fermat-Torricelli tree of degree
four the set of solutions of the four variable weights with
respect to the inverse 4wFT problem in $\mathbb{R}^{2}$ for a
given constant value $c$ which correspond to a family of weighted
Fermat-Torricelli tree of degree four that preserve the same
Euclidean tree structure (the corresponding 4wFT point remains the
same for a fixed boundary convex quadrilateral), such that the
three variable weights depend on a fourth variable weight and the
value of $c.$
\end{definition}

By taking into account Lemma~\ref{inv3imp} for the triangles
$\triangle A_{1}A_{2}A_{3},$ $\triangle A_{1}A_{3}A_{4},$
$\triangle A_{1}A_{2}A_{4}$ and the weighted floating equilibrium
condition (\ref{floatingequlcond}) taken from Theorem~\ref{theor1}
(\cite{Zachos/Zou:88}, \cite{Zachos:14})

\begin{proposition}{\cite[Proposition~4.4,p.~417]{Zachos/Zou:88},\cite[Problem~2, Definition~12,
Theorem~1,p.92,p.97-98]{Zachos:14}}\label{dynamicplasticityR2}
Suppose that $A_{0}$ does not belong to the intersection of the
linear segments $A_{1}A_{3}$ and $A_{2}A_{4}.$ The dynamic
plasticity of the variable weighted Fermat-Torricelli tree of
degree four in $\mathbb{R}^{2}$ is given by the following three
equations:
\begin{eqnarray} \label{plastic1}
 (\frac{B_2}{B_1})_{1234}=(\frac{B_2}{B_1})_{123}[1-(\frac{B_4}{B_1})_{1234}(\frac{B_1}{B_4})_{134}],\\
\label{plastic2}
 (\frac{B_3}{B_1})_{1234}=(\frac{B_3}{B_1})_{123}[1-(\frac{B_4}{B_1})_{1234}(\frac{B_1}{B_4})_{124}],
\end{eqnarray}
and
\begin{equation}\label{invcond4}
 (B_{1})_{1234}+(B_{2})_{1234}+(B_{3})_{1234}+(B_{4})_{1234}=c=const.
\end{equation}
\end{proposition}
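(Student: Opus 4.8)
The plan is to combine the weighted floating equilibrium condition \eqref{floatingequlcond} at the $4$wFT point $A_{0}$ with the closed-form solution of the inverse $3$wFT problem (Lemma~\ref{inv3imp}) for the three sub-triangles $\triangle A_{1}A_{2}A_{3}$, $\triangle A_{1}A_{3}A_{4}$, $\triangle A_{1}A_{2}A_{4}$, and then to eliminate the single direction $\vec{u}_{04}$ by planar linear algebra.

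First I would collect the relevant vector balances. Since $A_{0}$ is the $4$wFT point of the convex quadrilateral and the floating-case inequalities \eqref{floatingcase} hold, Theorem~\ref{theor1} gives the equilibrium \eqref{floatingequlcond},
\[
(B_{1})_{1234}\vec{u}_{01}+(B_{2})_{1234}\vec{u}_{02}+(B_{3})_{1234}\vec{u}_{03}+(B_{4})_{1234}\vec{u}_{04}=\vec 0 .
\]
Because $A_{0}$ is not the intersection point $O$ of the diagonals, after relabelling we may assume $A_{0}$ lies in the open region bounded by the relevant diagonal pieces, so that $A_{0}$ plays the role of the weighted Fermat–Torricelli point of each of the three sub-triangles; applying Lemma~\ref{inv3imp} to $\triangle A_{1}A_{2}A_{3}$, $\triangle A_{1}A_{3}A_{4}$ and $\triangle A_{1}A_{2}A_{4}$ then yields positive sub-weight triples $(B_{i})_{123}$, $(B_{i})_{134}$, $(B_{i})_{124}$ together with the equilibrium identities
\begin{align*}
(B_{1})_{123}\vec{u}_{01}+(B_{2})_{123}\vec{u}_{02}+(B_{3})_{123}\vec{u}_{03}&=\vec 0,\\
(B_{1})_{134}\vec{u}_{01}+(B_{3})_{134}\vec{u}_{03}+(B_{4})_{134}\vec{u}_{04}&=\vec 0,\\
(B_{1})_{124}\vec{u}_{01}+(B_{2})_{124}\vec{u}_{02}+(B_{4})_{124}\vec{u}_{04}&=\vec 0 .
\end{align*}

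Next I would eliminate $\vec{u}_{04}$. Solving the $\triangle A_{1}A_{3}A_{4}$ identity for $\vec{u}_{04}$ and substituting into the quadrilateral balance gives a relation of the form
\[
\Big[(B_{1})_{1234}-\tfrac{(B_{4})_{1234}(B_{1})_{134}}{(B_{4})_{134}}\Big]\vec{u}_{01}+(B_{2})_{1234}\vec{u}_{02}+\Big[(B_{3})_{1234}-\tfrac{(B_{4})_{1234}(B_{3})_{134}}{(B_{4})_{134}}\Big]\vec{u}_{03}=\vec 0 .
\]
As $\vec{u}_{01},\vec{u}_{02},\vec{u}_{03}$ are pairwise non-parallel, the linear dependences among them form a one-dimensional space, so this relation must be proportional to the $\triangle A_{1}A_{2}A_{3}$ identity; equating the ratios of the $\vec{u}_{02}$- and $\vec{u}_{01}$-coefficients and dividing through by $(B_{1})_{1234}$ produces exactly \eqref{plastic1}. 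Repeating the elimination with $\vec{u}_{04}$ taken instead from the $\triangle A_{1}A_{2}A_{4}$ identity, and comparing the $\vec{u}_{03}$- and $\vec{u}_{01}$-coefficients against the $\triangle A_{1}A_{2}A_{3}$ identity, produces \eqref{plastic2}. Finally, \eqref{invcond4} is nothing but the normalization $\sum_{i=1}^{4}(B_{i})_{1234}=c$ that is part of the statement of the inverse $4$wFT problem (Problem~\ref{inv4wFT}).

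The step I expect to be the main obstacle is the first one: justifying rigorously that the point $A_{0}$ does act as a weighted Fermat–Torricelli point, with an admissible (positive) weight triple, for each of the three sub-triangles — equivalently, that the three triangle equilibrium identities above are honest vector equations, that the denominators $(B_{4})_{134}$ and $(B_{4})_{124}$ are nonzero, and that $\vec{u}_{01},\vec{u}_{02},\vec{u}_{03}$ are in general position. This is precisely the place where the hypothesis $A_{0}\notin A_{1}A_{3}\cap A_{2}A_{4}$, the convexity of the quadrilateral, and the floating-case inequalities of Theorem~\ref{theor1} have to be used; once this is in place, the remainder is the routine linear elimination sketched above.
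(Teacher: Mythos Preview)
Your proposal is correct and follows essentially the same approach that the paper indicates: the paper does not give a full proof but states that the result follows ``By taking into account Lemma~\ref{inv3imp} for the triangles $\triangle A_{1}A_{2}A_{3}$, $\triangle A_{1}A_{3}A_{4}$, $\triangle A_{1}A_{2}A_{4}$ and the weighted floating equilibrium condition (\ref{floatingequlcond}) taken from Theorem~\ref{theor1}'', which is exactly the combination of ingredients you use and correctly flesh out via the elimination of $\vec{u}_{04}$ and the one-dimensionality of linear dependences among three pairwise non-parallel planar unit vectors.
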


It is worth mentioning that each quad of values
$\{(B_{1})_{1234},(B_{2})_{1234},(B_{3})_{1234},
(B_{4})_{1234}\},$ which satisfy simultaneously (\ref{plastic1}),
(\ref{plastic2}), (\ref{invcond4}) create a unique concentration
of different families of tetrafocal ellipses (Polyellipse or
Egglipse) to the same 4wFT point $A_{0}$ of
$A_{1}A_{2}A_{3}A_{4},$ A family of tetrafocal ellipses may be
constructed by selecting a decreasing sequence of real numbers
$c_{n}((B_{1})_{1234},(B_{2})_{1234},(B_{3})_{1234},(B_{4})_{1234}))$

\[c_{n}((B_{1})_{1234},(B_{2})_{1234},(B_{3})_{1234},(B_{4})_{1234});X)=\sum_{i=1}^{4}(B_{i})_{1234}\|A_{i}-X\|\]

which converge to the constant number
\[c((B_{1})_{1234},(B_{2})_{1234},(B_{3})_{1234},(B_{4})_{1234})\equiv
f(A_{0},(B_{1})_{1234},(B_{2})_{1234},(B_{3})_{1234},(B_{4})_{1234}).\]

or
\[c((B_{1})_{1234},(B_{2})_{1234},(B_{3})_{1234},(B_{4})_{1234})=\sum_{i=1}^{4}(B_{i})_{1234}\|A_{i}-A_{0}\|\]
or
\[c((B_{1})_{1234},(B_{2})_{1234},(B_{3})_{1234},(B_{4})_{1234})=\sum_{i=1}^{4}(B_{i})_{1234}a_{i}.\]
The concentration of different families of tetrafocal ellipses to
the same point provide a surprising connection with a problem
posed by R. Descartes in 1638. According to
\cite[Chapter~II,p.~235]{BolMa/So:99}, in a letter from August 23,
1638, R. Descartes invited P. de Fermat to investigate the
properties of tetrafocal ellipses in $\mathbb{R}^{2}.$ The dynamic
plasticity of weighted quadrilaterals solves the problem of
concentration of tetrafocal ellipse and offers a new property to
R.Descartes' problem.

We proceed by deriving a new system of dynamic plasticity
equations for a weighted Fermat-Torricelli tree of degree four
which also includes a class of weighted Fermat-Torricelli trees of
degree four which coincides with the two diagonals of the boundary
convex quadrilateral for $(B_{1})_{1234}=(B_{3})_{1234}$ and
$(B_{2})_{1234}=(B_{4})_{1234}.$

\begin{proposition}\label{dynamicplasticityR2n}
The dynamic plasticity of the variable weighted Fermat-Torricelli
tree of degree four in $\mathbb{R}^{2}$ is given by the following
three equations:
\begin{eqnarray} \label{plastic14n}
(B_{1})_{1234}^{2}+ (B_{2})_{1234}^{2}+2 (B_{1})_{1234}
(B_{2})_{1234}\cos\alpha_{102}= \nonumber\\
(B_{3})_{1234}^{2}+(B_{4})_{1234}^{2}+2 (B_{3})_{1234}
(B_{4})_{1234}\cos\alpha_{304},
\end{eqnarray}
\begin{eqnarray}\label{plastic24n}
(B_{1})_{1234}^{2}+ (B_{4})_{1234}^{2}+2 (B_{1})_{1234}
(B_{4})_{1234}\cos\alpha_{104}=\nonumber\\ (B_{2})_{1234}^{2}+
(B_{3})_{1234}^{2}+2 (B_{2})_{1234}
(B_{3})_{1234}\cos\alpha_{203},
\end{eqnarray}
and
\begin{equation}\label{invcond4n}
 (B_{1})_{1234}+(B_{2})_{1234}+(B_{3})_{1234}+(B_{4})_{1234}=c const.
\end{equation}
\end{proposition}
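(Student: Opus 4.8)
The plan is to run the vector computation behind the Torricelli--Engelbrecht solution of Section~\ref{sec1} in the inverse direction, keeping the $4$wFT point fixed while the weights vary. By the very meaning of the dynamic plasticity of a weighted Fermat--Torricelli tree of degree four, every member of the family shares the same Euclidean tree structure for the fixed boundary convex quadrilateral $A_{1}A_{2}A_{3}A_{4}$; in particular the $4$wFT point $A_{0}$ is the same for every admissible quad $\{(B_{1})_{1234},(B_{2})_{1234},(B_{3})_{1234},(B_{4})_{1234}\}$, and hence so are the unit vectors $\vec{u}_{0i}$ from $A_{0}$ to $A_{i}$ and the angles $\alpha_{i0j}=\angle A_{i}A_{0}A_{j}$, all of which are then functions of the vertices alone and obey $\alpha_{102}+\alpha_{203}+\alpha_{304}+\alpha_{401}=2\pi$. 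Since $A_{0}$ is a genuine interior vertex of degree four, we are in the weighted floating case of Theorem~\ref{theor1}, so the equilibrium condition (\ref{floatingequlcond}), namely $\sum_{i=1}^{4}(B_{i})_{1234}\vec{u}_{0i}=\vec{0}$, holds for each quad in the family.

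Next I would split this four--term vector identity into the two complementary $2$--$2$ partitions of the vertices. Writing
\[(B_{1})_{1234}\vec{u}_{01}+(B_{2})_{1234}\vec{u}_{02}=-\left[(B_{3})_{1234}\vec{u}_{03}+(B_{4})_{1234}\vec{u}_{04}\right]\]
and equating the squared Euclidean norms of the two sides, using $\|\vec{u}_{0i}\|=1$ and $\vec{u}_{0i}\cdot\vec{u}_{0j}=\cos\alpha_{i0j}$, yields (\ref{plastic14n}); this is exactly the computation that produced equation~(\ref{third}) in the proof of the Torricelli--Engelbrecht solution, merely rewritten without the $\arccos$. Regrouping instead as
\[(B_{1})_{1234}\vec{u}_{01}+(B_{4})_{1234}\vec{u}_{04}=-\left[(B_{2})_{1234}\vec{u}_{02}+(B_{3})_{1234}\vec{u}_{03}\right]\]
and squaring again gives (\ref{plastic24n}). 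The relation (\ref{invcond4n}) is simply the normalization $\sum_{i}(B_{i})_{1234}=c$ built into the inverse $4$wFT problem (Problem~\ref{inv4wFT}). The point worth stressing is that no weight is divided by anywhere, in contrast with (\ref{plastic1})--(\ref{plastic2}), where the ratios $(B_{1}/B_{4})_{134}$ and $(B_{1}/B_{4})_{124}$ occur: consequently (\ref{plastic14n})--(\ref{invcond4n}) remain meaningful and hold also on the locus $(B_{1})_{1234}=(B_{3})_{1234}$, $(B_{2})_{1234}=(B_{4})_{1234}$, where $A_{0}$ coincides with the intersection $O$ of the diagonals; there $\vec{u}_{03}=-\vec{u}_{01}$ and $\vec{u}_{04}=-\vec{u}_{02}$, so $\alpha_{102}=\alpha_{304}$ and $\alpha_{104}=\alpha_{203}$ (vertical angles), and both displayed equations degenerate to identities.

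The main obstacle I anticipate is not this forward derivation, which is a one--line vector calculation, but pinning down the logical status of the statement. Squaring the equilibrium identity destroys sign information, so a priori (\ref{plastic14n}) and (\ref{plastic24n}) are only necessary conditions, and to claim that the plasticity family is exactly the solution set of the three equations one must establish the converse: a positive quadruple satisfying (\ref{plastic14n})--(\ref{invcond4n}) must force $\sum_{i}(B_{i})_{1234}\vec{u}_{0i}=\vec{0}$, after which the uniqueness part~(a) of Theorem~\ref{theor1} identifies $A_{0}$ as the associated $4$wFT point. The argument I would use is that the cyclic ordering of $\vec{u}_{01},\vec{u}_{02},\vec{u}_{03},\vec{u}_{04}$ around $A_{0}$, together with $\alpha_{102}+\alpha_{203}+\alpha_{304}+\alpha_{401}=2\pi$ and the positivity of the weights, leaves, among the pairs of equal--length vectors produced by the two squared relations, only the pair that actually cancels; making this orientation bookkeeping precise — and checking it degenerates correctly when $A_{0}=O$ — is the delicate step and the place where I would spend the most care.
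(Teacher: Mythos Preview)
Your approach is essentially the paper's own: start from the floating equilibrium condition $\sum_i (B_i)_{1234}\vec u_{0i}=\vec 0$, regroup into two $2$--$2$ blocks, and square to obtain (\ref{plastic14n}) and (\ref{plastic24n}), with (\ref{invcond4n}) coming from the normalization in Problem~\ref{inv4wFT}. In fact your regrouping $(1,4)$ vs.\ $(2,3)$ for the second relation is the one that actually matches (\ref{plastic24n}); the paper writes the split as $(1,3)$ vs.\ $(2,4)$, which is a slip, since squaring that pair would produce $\cos\alpha_{103}$ and $\cos\alpha_{204}$ rather than $\cos\alpha_{104}$ and $\cos\alpha_{203}$.

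One remark on scope: the converse you flag as the ``delicate step'' is simply not treated in the paper. The paper's proof stops after the forward squaring computation and does not argue that every positive solution of (\ref{plastic14n})--(\ref{invcond4n}) forces the equilibrium at $A_0$; it records these relations as the plasticity equations and moves on (the degenerate diagonal case is handled separately in Corollary~\ref{two equal} by exactly the vertical-angle observation you make). So your proposal already covers everything the paper proves, and your caution about sufficiency is an observation that goes beyond what the paper establishes rather than a gap relative to it.
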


\begin{proof}

Suppose that we select four positive weights $(B_{i})_{1234}(0)$
which correspond the vertex $A_{i}$ of the boundary convex
quadrilateral $A_{1}A_{2}A_{3}A_{4},$ such that the weighted
floating inequalities (\ref{floatingcase}) of Theorem~\ref{theor1}
hold, in order to locate the 4wFT point $A_{0}$ at the interior of
$A_{1}A_{2}A_{3}A_{4}.$

From the weighted floating (variable weighted) equilibrium
condition of the 4wFT point (\ref{floatingequlcond})we get

\begin{equation}\label{vect1}
(B_{1})_{1234}\vec{u}_{10}+(B_{2})_{1234}\vec{u}_{20}=-(B_{3})_{1234}\vec{u}_{30}-(B_{4})_{1234}\vec{u}_{40}
\end{equation}

and

\begin{equation}\label{vect2}
(B_{1})_{1234}\vec{u}_{10}+(B_{3})_{1234}\vec{u}_{30}=-(B_{2})_{1234}\vec{u}_{20}-(B_{4})_{1234}\vec{u}_{40},
\end{equation}

which yield (\ref{plastic14n}) and (\ref{plastic24n}),
respectively.

\end{proof}

\begin{corollary}\label{two equal}
If the variable weighted 4wFT point is the intersection point of
$A_{1}A_{3}$ and $A_{2}A_{4},$ then

\begin{equation}\label{2eq1}
(B_{1})_{1234}=(B_{3})_{1234}
\end{equation}

and

\begin{equation}\label{2eq2}
(B_{2})_{1234}=(B_{4})_{1234}.
\end{equation}

$(B_{1})_{1234}=(B_{3})_{1234}$ and
$(B_{2})_{1234}=(B_{4})_{1234}.$
\end{corollary}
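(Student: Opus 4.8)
The plan is to specialize the dynamic plasticity equations of Proposition~\ref{dynamicplasticityR2n} to the degenerate configuration in which the variable 4wFT point coincides with the intersection point $O$ of the diagonals $A_1A_3$ and $A_2A_4$. In that case the four rays $A_0A_i$ collapse onto the two diagonal lines, so the unit vectors satisfy $\vec{u}_{10}=-\vec{u}_{30}$ and $\vec{u}_{20}=-\vec{u}_{40}$; equivalently the four angles around $A_0$ degenerate to $\alpha_{102}=\alpha_{304}$ and $\alpha_{203}=\alpha_{104}$, both being supplementary to each other. The strategy is simply to feed these angle relations into (\ref{plastic14n}) and (\ref{plastic24n}) and read off the forced relations among the weights.

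First I would write $\gamma\equiv\alpha_{102}=\alpha_{304}$ and note $\alpha_{104}=\alpha_{203}=\pi-\gamma$, so $\cos\alpha_{104}=\cos\alpha_{203}=-\cos\gamma$. Substituting into (\ref{plastic14n}) gives
\begin{equation}\label{eq:corproof1}
(B_1)_{1234}^2+(B_2)_{1234}^2+2(B_1)_{1234}(B_2)_{1234}\cos\gamma=(B_3)_{1234}^2+(B_4)_{1234}^2+2(B_3)_{1234}(B_4)_{1234}\cos\gamma,
\end{equation}
and substituting into (\ref{plastic24n}) gives
\begin{equation}\label{eq:corproof2}
(B_1)_{1234}^2+(B_4)_{1234}^2-2(B_1)_{1234}(B_4)_{1234}\cos\gamma=(B_2)_{1234}^2+(B_3)_{1234}^2-2(B_2)_{1234}(B_3)_{1234}\cos\gamma.
\end{equation}
Adding (\ref{eq:corproof1}) and (\ref{eq:corproof2}) cancels the squared terms in pairs and leaves $2\cos\gamma[(B_1)_{1234}(B_2)_{1234}-(B_3)_{1234}(B_4)_{1234}-(B_1)_{1234}(B_4)_{1234}+(B_2)_{1234}(B_3)_{1234}]=0$, i.e. $2\cos\gamma\,[(B_1)_{1234}-(B_3)_{1234}][(B_2)_{1234}+(B_4)_{1234}]=0$. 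Since the weights are positive and (generically) $\cos\gamma\neq 0$, this forces (\ref{2eq1}). Then subtracting the two equations, or resubstituting $(B_1)_{1234}=(B_3)_{1234}$ into (\ref{eq:corproof1}), collapses it to $(B_2)_{1234}^2+2(B_1)_{1234}(B_2)_{1234}\cos\gamma=(B_4)_{1234}^2+2(B_1)_{1234}(B_4)_{1234}\cos\gamma$, hence $[(B_2)_{1234}-(B_4)_{1234}][(B_2)_{1234}+(B_4)_{1234}+2(B_1)_{1234}\cos\gamma]=0$; the second factor is a sum of positive quantities with $\cos\gamma>0$ in the convex case, so (\ref{2eq2}) follows.

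The one point that needs care—and I expect it to be the main obstacle—is the hypothesis that $\cos\gamma\neq 0$ (equivalently that the diagonals do not meet at a right angle) and that $(B_2)_{1234}+(B_4)_{1234}+2(B_1)_{1234}\cos\gamma>0$. The first can be sidestepped: if $\gamma=\pi/2$ the balance condition (\ref{floatingequlcond}) at $O$ directly reads $[(B_1)_{1234}-(B_3)_{1234}]\vec{u}_{10}+[(B_2)_{1234}-(B_4)_{1234}]\vec{u}_{20}=\vec 0$ with $\vec{u}_{10}\perp\vec{u}_{20}$, so both bracketed coefficients vanish and the conclusion holds a fortiori; I would include this as a short remark. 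The positivity of the second factor holds automatically whenever $\gamma\le\pi/2$, which is the case for at least one of the two diagonal angles in any convex quadrilateral, and by relabeling one may always arrange the argument so the nondegenerate factor is the one used. Alternatively, the cleanest route is to bypass the angle substitution entirely and argue straight from (\ref{floatingequlcond}): at $A_0=O$ we have $(B_1)_{1234}\vec{u}_{10}+(B_3)_{1234}\vec{u}_{30}=[(B_1)_{1234}-(B_3)_{1234}]\vec{u}_{10}$ and likewise for the other pair, so (\ref{floatingequlcond}) becomes a vanishing linear combination of the two linearly independent diagonal directions $\vec{u}_{10},\vec{u}_{20}$, which immediately yields both (\ref{2eq1}) and (\ref{2eq2}). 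I would present this vector argument as the proof and relegate the computational route through (\ref{plastic14n})--(\ref{plastic24n}) to a remark confirming consistency.
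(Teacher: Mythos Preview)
Your proposed main proof---the direct vector argument from the balance condition (\ref{floatingequlcond}) using $\vec{u}_{30}=-\vec{u}_{10}$, $\vec{u}_{40}=-\vec{u}_{20}$ and the linear independence of the two diagonal directions---is correct and is cleaner than what the paper does. The paper's own proof simply asserts that substituting $\alpha_{102}=\alpha_{304}$ and $\alpha_{104}=\alpha_{203}$ into (\ref{plastic14n}) and (\ref{plastic24n}) yields (\ref{2eq1}) and (\ref{2eq2}) ``respectively,'' without displaying any algebra; so your vector route is a genuinely different and more transparent argument, while your computational route is the paper's approach fleshed out.

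However, your computational route contains an arithmetic slip. When you add (\ref{eq:corproof1}) and (\ref{eq:corproof2}) the squared terms do \emph{not} all cancel: the sum of the left sides carries $2(B_1)_{1234}^2$ and the sum of the right sides carries $2(B_3)_{1234}^2$. What one actually obtains after adding is
\[
(B_1+B_3)\bigl[(B_1-B_3)+\cos\gamma\,(B_2-B_4)\bigr]=0,
\]
and after subtracting,
\[
(B_2+B_4)\bigl[(B_2-B_4)+\cos\gamma\,(B_1-B_3)\bigr]=0,
\]
where I have suppressed the subscript $1234$. Combining these two linear relations gives $\sin^{2}\gamma\,(B_1-B_3)=0$, hence $B_1=B_3$ (since $0<\gamma<\pi$), and then $B_2=B_4$. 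Your stated factorization $[(B_1)_{1234}-(B_3)_{1234}][(B_2)_{1234}+(B_4)_{1234}]$ is also off: the bilinear expression you wrote actually factors as $(B_1+B_3)(B_2-B_4)$. None of this affects the validity of the vector argument you intend to present as the proof, but if you keep the computational route as a remark you should correct these two points; once fixed, the argument no longer needs any case split on $\cos\gamma$ or any positivity hypothesis on the second factor.
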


\begin{proof}
By letting $\alpha_{102}=\alpha_{304}$ and
$\alpha_{104}=\alpha_{203},$ in (\ref{plastic14n}) and
(\ref{plastic24n}) we obtain (\ref{2eq1}) and (\ref{2eq2}),
respectively.
\end{proof}

\begin{remark}

The dynamic plasticity equations of
Theorem~\ref{dynamicplasticityR2} depend on the solutions of the
inverse 3FT problem for $\triangle A_{1}A_{2}A_{3},$ $\triangle
A_{1}A_{3}A_{4}$ and $\triangle A_{1}A_{2}A_{4}.$ Thus, the
corresponding 4wFT point $A_{0}$ remains at the interior of
$\triangle A_{1}A_{2}A_{3},$ in order to derive from the inverse
3wFT problem for $\triangle A_{1}A_{2}A_{3}$ the inverse wFT
problem for $A_{1}A_{2}A_{3}A_{4}.$ The dynamic plasticity
equations of Theorem~\ref{dynamicplasticityR2n} generalizes the
dynamic plasticity equations of Theorem~\ref{dynamicplasticityR2}
because $A_{0}$ could also lie on the side $A_{1}A_{3}$ of
$\triangle A_{1}A_{2}A_{3}$ and the side $A_{2}A_{4}$ of
$\triangle A_{1}A_{2}A_{4}.$ These are the cases where the inverse
3wFT problem for $\triangle A_{1}A_{2}A_{3}$ and $\triangle
A_{1}A_{2}A_{4}$ do not hold.

\end{remark}

%----------------connection with R Descartes------------

\section{A universal Fermat-Torricelli minimal value of a family of absorbing generalized Gauss trees of degree three}

Suppose that an absorbing weighted Fermat-Torricelli tree of
degree four is derived as a limiting tree structure from an
absorbing generalized Gauss-Menger tree of degree three regarding
a fixed boundary quadrilateral $A_{1}A_{2}A_{3}A_{4}$ for a
specific value $c_{G}$ of the generalized Gauss variable.

We need to consider the following lemma which gives $\bar{B_{i}}$
as a linear function of $\bar{B_{4}}$ regarding a fixed variable
weighted Fermat-Torricelli tree of degree four in
$\mathbb{R}^{2}.$

\begin{lemma}{\cite[Corollary~4.5,p.~418]{Zachos/Zou:88}}\label{lemlinearB4}Let
$\sum_{1234}^{}\bar{B}:=(\bar{B_1})_{1234}(1+\frac{\bar{B_2}}{\bar{B_1}}+\frac{\bar{B_3}}{\bar{B_1}}+\frac{\bar{B_4}}{\bar{B_1}})_{1234}$.\\
If
$\sum_{1234}^{}\bar{B}=\sum_{123}^{}\bar{B}=\sum_{124}^{}\bar{B}=\sum_{134}^{}\bar{B}$,
then \[(\bar{B_i})_{1234}=x_i (\bar{B_4})_{1234}+ y_i, i=1,2,3:\]
\[(x_1,y_1)=(\frac{(\frac{\bar{B_1}}{\bar{B_4}})_{134}(\frac{\bar{B_2}}{\bar{B_1}})_{123}+(\frac{\bar{B_1}}{\bar{B_4}})_{124}(\frac{\bar{B_3}}{\bar{B_1}})_{123}-1
         }{1+(\frac{\bar{B_2}}{\bar{B_1}})_{123}+(\frac{\bar{B_3}}{\bar{B_1}})_{123}},(\bar{B_1})_{123})\] \[(x_2,y_2)=(x_1(\frac{\bar{B_2}}{\bar{B_1}})_{123}-(\frac{\bar{B_1}}{\bar{B_4}})_{134}(\frac{\bar{B_2}}{\bar{B_1}})_{123}
         ,(\bar{B_2})_{123})\] \[(x_3,y_3)=(x_1(\frac{\bar{B_3}}{\bar{B_1}})_{123}-(\frac{\bar{B_1}}{\bar{B_4}})_{124}(\frac{\bar{B_3}}{\bar{B_1}})_{123},(\bar{B_3})_{123}).\]
\end{lemma}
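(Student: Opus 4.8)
The plan is to derive the three affine relations as a purely algebraic consequence of the dynamic plasticity equations (\ref{plastic1})--(\ref{plastic2}) of Proposition~\ref{dynamicplasticityR2}, together with the normalization hypothesis and the constant-sum constraint (\ref{invcond4}). Write $b_i:=(\bar{B_i})_{1234}$ for the varying weights of the family, and abbreviate the quantities that remain frozen along it by $r:=(\bar{B_2}/\bar{B_1})_{123}$, $r':=(\bar{B_3}/\bar{B_1})_{123}$, $s:=(\bar{B_1}/\bar{B_4})_{134}$ and $s':=(\bar{B_1}/\bar{B_4})_{124}$. By Lemma~\ref{inv3imp} each of these ratios equals a quotient of sines of angles subtended at $A_0$ inside the relevant boundary triangle; since the boundary quadrilateral and the $4$wFT point $A_0$ are held fixed throughout the family, $r,r',s,s'$ are genuine constants. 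The use of Proposition~\ref{dynamicplasticityR2} tacitly requires, as there, that $A_0$ not be the intersection point of the diagonals of $A_1A_2A_3A_4$.

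First I would clear the denominator $b_1$ in (\ref{plastic1}) and (\ref{plastic2}) to get the two linear identities $b_2=r\,b_1-r s\,b_4$ and $b_3=r'\,b_1-r' s'\,b_4$. Substituting both into the constant-sum constraint $b_1+b_2+b_3+b_4=c$ collapses it to $b_1(1+r+r')=c-(1-rs-r's')\,b_4$, so $b_1$ is already affine in $b_4$. To recognise the coefficients as the claimed ones I would invoke the normalization hypothesis in the form $\sum_{123}\bar{B}=c$, i.e.\ $(\bar{B_1})_{123}(1+r+r')=c$; dividing through turns the constant term into $(\bar{B_1})_{123}$ and the slope into $(rs+r's'-1)/(1+r+r')$, which is exactly $(x_1,y_1)$ after writing $r,r',s,s'$ back out.

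Finally, feeding $b_1=x_1 b_4+(\bar{B_1})_{123}$ back into $b_2=r\,b_1-rs\,b_4$ and $b_3=r'\,b_1-r's'\,b_4$, and using $(\bar{B_2})_{123}=r\,(\bar{B_1})_{123}$ and $(\bar{B_3})_{123}=r'\,(\bar{B_1})_{123}$ (the inverse $3$wFT solution for $\triangle A_1A_2A_3$ again, calibrated by $\sum_{123}\bar{B}=c$), yields $x_2=x_1 r-sr$ with $y_2=(\bar{B_2})_{123}$, and $x_3=x_1 r'-s'r'$ with $y_3=(\bar{B_3})_{123}$, which match the stated formulas verbatim.

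There is no real obstacle here; the content is bookkeeping. The two points that need care are: (i) the justification that $r,r',s,s'$ stay constant along the family, which is exactly Lemma~\ref{inv3imp} combined with the fixed-configuration assumption; and (ii) the observation that the hypotheses $\sum_{124}\bar{B}=c$ and $\sum_{134}\bar{B}=c$ enter only to pin the inverse $3$wFT solutions for $\triangle A_1A_3A_4$ and $\triangle A_1A_2A_4$ to the same constant $c$, and otherwise drop out of the algebra, since only the scale-invariant ratios $s$ and $s'$ of those solutions appear in (\ref{plastic1})--(\ref{plastic2}).
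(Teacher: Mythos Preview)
Your derivation is correct: clearing denominators in (\ref{plastic1})--(\ref{plastic2}) to get $b_2=r\,b_1-rs\,b_4$ and $b_3=r'\,b_1-r's'\,b_4$, substituting into the constant-sum constraint, and then using $\sum_{123}\bar B=c$ to identify the intercept as $(\bar B_1)_{123}$ is exactly the right bookkeeping, and the back-substitution for $(x_2,y_2)$ and $(x_3,y_3)$ checks out line by line.

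Note, however, that there is nothing to compare against here: the paper does not supply its own proof of this lemma but merely quotes it from \cite[Corollary~4.5,~p.~418]{Zachos/Zou:88}. Your argument reconstructs the result from Proposition~\ref{dynamicplasticityR2} and the normalization hypothesis, which is presumably what the cited source does as well, since (\ref{plastic1})--(\ref{plastic2}) originate there too. Your remark about point (ii) is accurate: the extra hypotheses $\sum_{124}\bar B=c$ and $\sum_{134}\bar B=c$ serve only to fix the scale of the triangle solutions so that the quantities $(\bar B_i)_{ijk}$ appearing in the $y$-coordinates are well defined, while only the scale-free ratios $s,s'$ enter the slopes.
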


%-------------------------------------------------------------------------
\begin{theorem}\label{nounivconstant}
A universal constant does not exist for a unique fixed (variable)
weighted Fermat-Torricelli tree of degree four which is obtained
as a limiting tree structure from a family of variable weighted
Gauss-Menger trees (or full variable weighted Steiner trees) w.r.
to a fixed boundary convex quadrilateral $A_{1}A_{2}A_{3}A_{4},$
which depend on the variable weights $(\bar{B_{i}})_{1234}$ which
satisfy the dynamic plasticity equations

\begin{equation} \label{plastic1P4quadmod}
(\frac{\bar{B_2}}{\bar{B_1}})_{1234}=(\frac{\bar{B_2}}{\bar{B_1}})_{123}[1-(\frac{\bar{B_4}}{\bar{B_1}})_{1234}
(\frac{\bar{B_1}}{\bar{B_4}})_{134}],
\end{equation}
\begin{equation} \label{plastic2P5quadmod}
(\frac{\bar{B_3}}{\bar{B_1}})_{1234}=(\frac{\bar{B_3}}{\bar{B_1}})_{123}[1-(\frac{\bar{B_4}}{\bar{B_1}})_{1234}
(\frac{\bar{B_1}}{\bar{B_4}})_{124}],
\end{equation}
and
\begin{equation}\label{invcond4quadmod}
 (\bar{B_{1}})_{1234}+(\bar{B_{2}})_{1234}+(\bar{B_{3}})_{1234}+(\bar{B_{4}})_{1234}=c constant.
\end{equation}

or

\begin{eqnarray} \label{plastic14nmod}
(\bar{B_{1}})_{1234}^{2}+ (\bar{B_{2}})_{1234}^{2}+2
(\bar{B_{1}})_{1234}
(\bar{B_{2}})_{1234}\cos\alpha_{102}= \nonumber\\
(\bar{B_{3}})_{1234}^{2}+(\bar{B_{4}})_{1234}^{2}+2
(\bar{B_{3}})_{1234} (\bar{B_{4}})_{1234}\cos\alpha_{304},
\end{eqnarray}
\begin{eqnarray}\label{plastic24nmod}
(\bar{B_{1}})_{1234}^{2}+ (\bar{B_{4}})_{1234}^{2}+2
(\bar{B_{1}})_{1234} (\bar{B_{4}})_{1234}\cos\alpha_{104}=\nonumber\\
(\bar{B_{2}})_{1234}^{2}+ (\bar{B_{3}})_{1234}^{2}+2
(\bar{B_{2}})_{1234} (\bar{B_{3}})_{1234}\cos\alpha_{203},
\end{eqnarray}
and
\begin{equation}\label{invcond4nmod}
 (\bar{B_{1}})_{1234}+(\bar{B_{2}})_{1234}+(\bar{B_{3}})_{1234}+(\bar{B_{4}})_{1234}=c const.
\end{equation}

\end{theorem}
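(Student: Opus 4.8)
The plan is to show that the ``universal constant'' — the common limiting value of the weighted tree length as the Gauss-Menger trees of degree three collapse onto the weighted Fermat--Torricelli tree of degree four — varies along the family parametrized by the dynamic plasticity equations, hence cannot be a single constant. First I would fix the boundary quadrilateral $A_{1}A_{2}A_{3}A_{4}$ and the $4$wFT point $A_{0}$ (which is the same for every member of the family, by the definition of dynamic plasticity), and write the limiting weighted length as $L((\bar B_{i})_{1234}) = \sum_{i=1}^{4}(\bar B_{i})_{1234}\,\|A_{i}-A_{0}\| = \sum_{i=1}^{4}(\bar B_{i})_{1234}\,a_{i}$, since in the $l\to 0$ limit the two $3$wFT points $A_{0},A_{0^{\prime}}$ coincide with the common $4$wFT point and the term $x_{G}l$ drops out. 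A universal constant would mean $L$ is independent of the choice of admissible weight quadruple.

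Next I would parametrize the family. Using Lemma~\ref{lemlinearB4}, along the first plasticity system each $(\bar B_{i})_{1234}$ for $i=1,2,3$ is an affine function $x_{i}(\bar B_{4})_{1234}+y_{i}$ of the single free parameter $t:=(\bar B_{4})_{1234}$, with the $x_{i},y_{i}$ depending only on the fixed geometric data (the ratios coming from the inverse $3$wFT problems for $\triangle A_{1}A_{2}A_{3}$, $\triangle A_{1}A_{3}A_{4}$, $\triangle A_{1}A_{2}A_{4}$ via Lemma~\ref{inv3imp}); the normalization (\ref{invcond4quadmod}) then pins $t$ to the single value determined by $c$. So actually, for fixed $c$ the first system admits essentially one solution — the right reading of the statement is that as $c$ ranges over its admissible interval, the limiting value $L$ need not stay constant. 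Substituting the affine expressions into $L$ gives
\begin{equation}\label{Lexpr}
L(t) = \Bigl(\sum_{i=1}^{3}x_{i}a_{i} + a_{4}\Bigr)\,t + \sum_{i=1}^{3}y_{i}a_{i},
\end{equation}
an affine function of $t$. The key computation is then to exhibit the coefficient $\sum_{i=1}^{3}x_{i}a_{i}+a_{4}$ as generically nonzero: plug in the explicit $x_{i}$ from Lemma~\ref{lemlinearB4} and the law-of-sines expressions for the $3$wFT weight ratios, and show the resulting trigonometric expression in the quadrilateral's angles does not vanish identically. The cleanest route is to produce one explicit quadrilateral (e.g. the square with a specific asymmetric choice, extending Example~\ref{exam1}) where two distinct admissible values of $c$ yield two distinct values of $L$; a single numerical instance suffices to refute universality.

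For the second plasticity system (\ref{plastic14nmod})--(\ref{invcond4nmod}) I would argue analogously: equations (\ref{plastic14nmod}) and (\ref{plastic24nmod}) are two quadratic constraints in the four weights which, together with the linear normalization (\ref{invcond4nmod}), cut out a one-parameter family through $A_{0}$ (by Proposition~\ref{dynamicplasticityR2n} these equations are exactly the squared equilibrium relations at $A_{0}$, so every member genuinely has $A_{0}$ as its $4$wFT point and the collapse-limit length is again $\sum (\bar B_{i})_{1234}a_{i}$). Differentiating the three constraints and $L=\sum (\bar B_{i})_{1234}a_{i}$ along the family and checking that $dL$ is not forced to vanish — equivalently, that the gradient of $L$ is not in the span of the gradients of the three constraints at a generic point — finishes it; again a single worked example is enough. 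The main obstacle I anticipate is purely bookkeeping: carrying the explicit trigonometric forms of the $3$wFT ratios through (\ref{Lexpr}) without error, and making sure the chosen numerical example stays inside the floating-case inequalities (\ref{floatingcase}) and the admissibility inequalities (\ref{ineq1})--(\ref{ineq2}) so that the limiting structures it produces are legitimate Gauss-Menger trees of degree three; the conceptual content — $L$ is affine (resp. smooth with non-degenerate differential) in the family parameter and its leading coefficient is generically nonzero — is straightforward.
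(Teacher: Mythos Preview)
Your proposal has a genuine gap: you have misidentified the ``universal constant'' in question. In the paper's setup the universal constant is not the limiting weighted tree length $L=\sum_i(\bar B_i)_{1234}a_i$, but the limiting value of the generalized Gauss variable $x_G$ as $l\to 0$ (see Definitions~\ref{varGauss} and~\ref{absorbingFermatTorricelli set}). Concretely, in the limit $A_{0'}\to A_0$ the angle relation (\ref{ft03bar}) forces $x_G^2=\bar B_1^2+\bar B_4^2+2\bar B_1\bar B_4\cos\alpha_{104}$, and the theorem asserts that this $x_G$ is not the same across all members of the plasticity family. Your quantity $L$ is a different functional and its behaviour does not settle the statement.

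There is also a subsidiary error in your parametrization step. You assert that ``for fixed $c$ the first system admits essentially one solution,'' and then reinterpret the theorem as a statement about varying $c$. In fact the affine relations of Lemma~\ref{lemlinearB4} satisfy $x_1+x_2+x_3+1=0$ identically (as the numerics in Example~\ref{numuniversalminimumFTvalue2} confirm), so the normalization (\ref{invcond4quadmod}) is automatically preserved and the system genuinely has a one-parameter family of solutions at fixed $c$, parametrized by $(\bar B_4)_{1234}$. The paper's argument works with this family: using (\ref{universalvaluesxG}) it expresses $l$ as a function of $x_G$ and $(\bar B_4)_{1234}$, and argues by contradiction that a constant limiting $x_G$ (independent of $(\bar B_4)_{1234}$) would yield an analytical determination of the $4$wFT angles, contradicting Theorem~\ref{nonexist4FTsol}. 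Your affine-derivative strategy could in principle be redirected at the correct quantity $x_G=\sqrt{\bar B_1^2+\bar B_4^2+2\bar B_1\bar B_4\cos\alpha_{104}}$, but as written it addresses the wrong object.
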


\begin{proof}

Suppose that we select four positive weights
$(\bar{B_{i}})_{1234}(0)$ which correspond the vertex $A_{i}$ of
the boundary convex quadrilateral $A_{1}A_{2}A_{3}A_{4},$ such
that the weighted floating inequalities (\ref{floatingcase}) of
Theorem~\ref{theor1} hold, in order to locate the 4wFT point
$A_{0}$ at the interior of $A_{1}A_{2}A_{3}A_{4}$ and particularly
at the interior of $\triangle A_{1}OA_{2},$ where $O$ is the
intersection of the diagonals $A_{1}A_{3}$ and $A_{2}A_{4},$
closer to the vertex $A_{1}.$

The location of the 3wFT points $A_{0}$ and $A_{0^{\prime}},$
respectively, is given by the relations
(Lemma~\ref{explicitsolution}):

\begin{equation}\label{base1nbar}
\cot{\varphi}=\frac{x_{G}
a_{12}+\bar{B_{4}}a_{14}\cos(\alpha_{214}-\alpha_{400^{\prime}})+\bar{B_{3}}a_{23}\cos(\alpha_{123}-\alpha_{30^{\prime}0})}{\bar{B_{4}}a_{14}\sin(\alpha_{214}-\alpha_{400^{\prime}})-\bar{B_{3}}a_{23}\sin(\alpha_{123}-\alpha_{30^{\prime}0})},
\end{equation}

\begin{equation}\label{a1solvesystemnbar}
a_{1}=\frac{a_{14}\sin(\alpha_{214}-\varphi-\alpha_{400^{\prime}})}{\sin(\alpha_{100^{\prime}}+\alpha_{400^{\prime}})}
\end{equation}

and

\begin{equation}\label{a2solvesystemnbar}
a_{2}=\frac{a_{23}\sin(\alpha_{123}+\varphi-\alpha_{300^{\prime}})}{\sin(\alpha_{20^{\prime}0}+\alpha_{30^{\prime}0})}
\end{equation}

where

\begin{equation}\label{ft01bar}
\alpha_{100^{\prime}}=\arccos\left(\frac{\bar{B_{4}}^{2}-\bar{B_{1}}^{2}-x_{G}^{2}}{2
\bar{B_{1}}x_{G}}\right),
\end{equation}
\begin{equation}\label{ft02bar}
\alpha_{0^{\prime}04}=\arccos\left(\frac{\bar{B_{1}}^{2}-\bar{B_{4}}^{2}-x_{G}^{2}}{2
\bar{B_{4}}x_{G}}\right),
\end{equation}
\begin{equation}\label{ft03bar}
\alpha_{104}=\arccos\left(\frac{x_{G}^{2}-\bar{B_{1}}^{2}-\bar{B_{4}}^{2}}{2
\bar{B_{1}}\bar{B_{4}}}\right),
\end{equation}

\begin{equation}\label{ft021bar}
\alpha_{00^{\prime}3}=\arccos\left(\frac{\bar{B_{2}}^{2}-\bar{B_{3}}^{2}-x_{G}^{2}}{2
\bar{B_{3}}x_{G}}\right),
\end{equation}
\begin{equation}\label{ft022bar}
\alpha_{00^{\prime}2}=\arccos\left(\frac{\bar{B_{3}}^{2}-x_{G}^{2}-\bar{B_{2}}^{2}}{2
x_{G}\bar{B_{2}}}\right),
\end{equation}

\begin{equation}\label{ft023bar}
\alpha_{20^{\prime}3}=\arccos\left(\frac{x_{G}^{2}-\bar{B_{2}}^{2}-\bar{B_{3}}^{2}}{2
\bar{B_{2}}\bar{B_{3}}}\right).
\end{equation}

and the variable weights $\bar{B_{i}}\equiv (\bar{B_{i}})_{1234}$
are taken from the system of equations (\ref{plastic1P4quadmod}),
(\ref{plastic2P5quadmod}), (\ref{invcond4quadmod}) or
(\ref{plastic14nmod}), (\ref{plastic24nmod}) and
(\ref{invcond4nmod}). Taking into account Lemma~\ref{lemlinearB4},
we express $(\bar{B_{i}})_{1234}$ as a function of
$\bar{B_{4}})_{1234}$

By taking into account the distance of $a_{12}$ from the line
defined by $A_{0}$ and $A_{0}^{\prime},$ and the distance of
$A_{2}$ from the line which passes through $A_{1}$ and is parallel
to the line defined by $A_{0}$ and $A_{0}^{\prime},$ we express
$l$ as a function w.r. to $\bar{B_{4}},$ $x_{G}$ and the five
Euclidean elements $a_{12},$ $a_{13},$ $a_{23},$ $a_{34},$
$a_{24},$ for $i=1,2,3,4.$

\begin{equation}\label{universalvaluesxG}
l=a_{1}\cos(\alpha_{100^{\prime}})+a_{2}\cos(\alpha_{20^{\prime}0})+a_{12}\cos(\varphi)
\end{equation}

By letting $l\equiv \epsilon$ a positive real number we derive a
nonlinear equation which depends only on the absorbing rate of the
generalized Gauss-Menger tree of degree three, where the 3wFT
point $A_{0}$ remains the same because the weights
$(\bar{B_{i}})_{1234}$ satisfy the dynamic plasticity equations of
the fixed variable weighted Fermat-Torricelli tree of degree four
concerning the same boundary quadrilateral $A_{1}A_{2}A_{3}A_{4}$
where the position of $A_{0}$ remains invariant in
$\mathbb{R}^{2}.$

By letting $l\equiv \epsilon_{i}$ a decreasing sequence which
converge to zero, $A_{0}^{\prime}$ will approach the fixed
position of $A_{0}.$

A universal constant $u_{c}$ may occur if $x_{G}(\epsilon_{k})\to
u_{c}$ for every positive real value of the variable weight
$(\bar{B_{4}})_{1234}.$ Thus, a weighted Fermat-Torricelli degree
four regarding the same boundary quadrilateral
$A_{1}A_{2}A_{3}A_{4}$ would exist with a constant absorbing rate
and could offer an analytical solution of the 4wFT problem in
$\mathbb{R}^{2}$ which is contradictory with the result of
Theorem~\ref{nonexist4FTsol}.
\end{proof}
%-------------------------------------------------------------------------------------
The location of the 4wFT point for a convex quadrilateral or the
position of a weighted Fermat-Torricelli tree of degree four is
given by the following lemma which has been derived in
\cite{Zachos/Zou:88}
($(B_{1})_{1234}>(B_{2})_{1234}>(B_{3})_{1234}>(B_{4})_{1234}$):

\begin{lemma}{\cite[Formula~(5),(6),(7),(8),p.~413]{Zachos/Zou:88}}\label{location4ft}
The following system of equations allow us to compute the position
of the 4wFT point $A_{0}$ and provides a necessary condition to
locate it at the interior of the convex quadrilateral
$A_{1}A_{2}A_{3}A_{4}:$

\begin{equation} \label{eq:evquad3}
\cot(\alpha_{013})=\frac{\sin(\alpha_{213})-\cos(\alpha_{213})
\cot(\alpha_{102})- \frac{a_{31}}{a_{12}
}\cot(\alpha_{304}+\alpha_{401})}
{-\cos(\alpha_{213})-\sin(\alpha_{213}) \cot(a_{102})+
\frac{a_{31}}{a_{12} }},
\end{equation}
\begin{equation} \label{eq:evquad4}
\cot(\alpha_{013})=\frac{\sin(\alpha_{314})-\cos(\alpha_{314})
\cot(\alpha_{401})+ \frac{a_{31}}{a_{41}
}\cot(\alpha_{304}+\alpha_{401})}
{\cos(\alpha_{314})+\sin(\alpha_{314}) \cot(\alpha_{401})-
\frac{a_{31}}{a_{41} }},
\end{equation}

\begin{eqnarray} \label{eq:evquad2}
(B_3)_{1234}^2=(B_1)_{1234}^2+(B_2)_{1234}^2+(B_4)_{1234}^2+\nonumber\\
2(B_2)_{1234}(B_4)_{1234}\cos(\alpha_{401}+\alpha_{102})+2(B_1)_{1234}(B_2)_{1234}\cos(\alpha_{102})+\nonumber\\
+2(B_1)_{1234}(B_4)_{1234}\cos(\alpha_{401}),
\end{eqnarray}

\begin{equation} \label{eq:evquad1}
\cot(\alpha_{304}+\alpha_{401})=
\frac{(B_1)_{1234}+(B_2)_{1234}\cos(\alpha_{102})+(B_4)_{1234}\cos(\alpha_{401}))}{(B_4)_{1234}\sin(\alpha_{401})-(B_2)_{1234}\sin(\alpha_{102})},
\end{equation}

\begin{equation}\label{eq:evquad5}
\alpha_{203}=2\pi-\alpha_{102}-\alpha_{304}-\alpha_{401},
\end{equation}

\begin{equation}\label{a01quad4ft1}
a_{01}=a_{14}\frac{\sin(\alpha_{013}+\alpha_{314}+\alpha_{401})}{\sin\alpha_{401}}
\end{equation}

and

\begin{equation}\label{a04quad4ft2}
a_{04}=a_{14}\frac{\sin(\alpha_{013}+\alpha_{314})}{\sin\alpha_{401}}.
\end{equation}

\end{lemma}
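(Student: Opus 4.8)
The plan is to build the system from two complementary sources of information about the 4wFT point $A_0$ in the weighted floating case: the \emph{weighted equilibrium condition} $\sum_{i=1}^{4}(B_i)_{1234}\,\vec{u}_{0i}=\vec 0$ supplied by Theorem~\ref{theor1}($b_2$), which ties the four weights to the four angles subtended at $A_0$, and the \emph{rigidity of the boundary quadrilateral} --- the fact that $A_1,A_2,A_3,A_4$ occupy prescribed positions --- which ties those same angles to the given lengths $a_{12},a_{23},a_{34},a_{41},a_{13}$.

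First I would name $\alpha_{102},\alpha_{203},\alpha_{304},\alpha_{401}$ the angles at $A_0$ between the consecutive rays $A_0A_i$; for interior $A_0$ these rays occur in cyclic order, so $\alpha_{102}+\alpha_{203}+\alpha_{304}+\alpha_{401}=2\pi$, which is (\ref{eq:evquad5}). Rewriting the equilibrium condition as $(B_3)_{1234}\vec{u}_{03}=-\big((B_1)_{1234}\vec{u}_{01}+(B_2)_{1234}\vec{u}_{02}+(B_4)_{1234}\vec{u}_{04}\big)$ and taking squared Euclidean norms, then substituting $\angle A_1A_0A_2=\alpha_{102}$, $\angle A_1A_0A_4=\alpha_{401}$ and $\angle A_2A_0A_4=\alpha_{102}+\alpha_{401}$ (the $2\pi$-ambiguity being invisible to the cosine), yields (\ref{eq:evquad2}). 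Resolving the \emph{same} vector equation into the component along $\vec{u}_{01}$ and the component orthogonal to $\vec{u}_{01}$, and dividing the two resulting scalar relations, gives $\cot(\alpha_{304}+\alpha_{401})$ in terms of $(B_1)_{1234}$, $(B_2)_{1234}$, $(B_4)_{1234}$, $\alpha_{102}$, $\alpha_{401}$, which is (\ref{eq:evquad1}).

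For (\ref{eq:evquad3}) and (\ref{eq:evquad4}) I would parametrise the location of $A_0$ by the angle $\alpha_{013}=\angle A_0A_1A_3$ between the ray $A_1A_0$ and the diagonal $A_1A_3$. Placing $A_0$ in the sub-triangle adapted to the weight ordering (near the heaviest vertex $A_1$) and working inside $\triangle A_1A_2A_3$, where $a_{12}$, $a_{13}=a_{31}$ and $\alpha_{213}$ are known: the law of sines in $\triangle A_1A_0A_2$ together with the incidence relations $\angle A_0A_1A_2=\alpha_{213}-\alpha_{013}$ and the expression of $\angle A_1A_0A_3$ through $\alpha_{304}+\alpha_{401}$ (up to orientation) lets one solve for the position of $A_0$ in the frame of $A_1A_3$ and read off (\ref{eq:evquad3}). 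The identical computation inside $\triangle A_1A_3A_4$, where $a_{41}$, $a_{31}$ and $\alpha_{314}$ are known, using the law of sines in $\triangle A_1A_0A_4$ and the analogous incidence relations, gives (\ref{eq:evquad4}); the two expressions for $\cot\alpha_{013}$ must coincide since they locate the same point. Finally, the law of sines in $\triangle A_1A_0A_4$ --- opposite side $a_{14}$, angle $\alpha_{401}$ at $A_0$, the remaining two angles written through $\alpha_{013}$ and $\alpha_{314}$ --- produces (\ref{a01quad4ft1}) and (\ref{a04quad4ft2}); requiring $\alpha_{102},\alpha_{203},\alpha_{304},\alpha_{401},\alpha_{013}$ all to lie in their admissible ranges (the ranges being fixed by $(B_1)_{1234}>(B_2)_{1234}>(B_3)_{1234}>(B_4)_{1234}$) is exactly the stated necessary condition for $A_0$ to be interior to $A_1A_2A_3A_4$.

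I expect the principal obstacle to be the angle bookkeeping in the last step: keeping the orientation conventions consistent between the two coordinatisations --- on which side of $A_1A_3$ does $A_0$ lie, whether $\angle A_1A_0A_3$ equals $\alpha_{304}+\alpha_{401}$ or its $2\pi$-complement, how $\alpha_{013}$ is measured --- since a sign slip there corrupts (\ref{eq:evquad3})--(\ref{eq:evquad4}); the weight-ordering hypothesis is precisely what removes that ambiguity. Everything else --- squaring a vector sum, resolving into components, and a handful of sine-rule applications --- is routine trigonometry.
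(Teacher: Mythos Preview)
The paper does not supply a proof of this lemma: it is quoted verbatim from \cite{Zachos/Zou:88} with a citation to the relevant formulas and page number, and no argument is reproduced here. So there is no ``paper's own proof'' to set your sketch against.

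That said, your outline is sound and aligns with the techniques the paper \emph{does} deploy in its own proofs. The derivation of (\ref{eq:evquad2}) by squaring the equilibrium identity and of (\ref{eq:evquad1}) by resolving it into components along and orthogonal to $\vec u_{01}$ is exactly the manoeuvre used in the proofs of Theorem~3 and Proposition~\ref{dynamicplasticityR2n}; (\ref{eq:evquad5}) is the trivial $2\pi$ identity; and the sine-rule computations in $\triangle A_1A_0A_4$ for (\ref{a01quad4ft1})--(\ref{a04quad4ft2}) go through as you describe once one knows $\angle A_0A_1A_4=\alpha_{013}+\alpha_{314}$, which is precisely what the placement of $A_0$ in $\triangle A_1OA_2$ guarantees. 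Your caution about the angle bookkeeping in (\ref{eq:evquad3})--(\ref{eq:evquad4}) is well placed --- that is where the weight-ordering hypothesis $(B_1)_{1234}>(B_2)_{1234}>(B_3)_{1234}>(B_4)_{1234}$ earns its keep, by pinning down on which side of the diagonal $A_1A_3$ the point $A_0$ sits and hence the signs in the incidence relations. Nothing in your plan is wrong or missing; it is a faithful reconstruction of what the cited source presumably does.
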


\begin{example}\label{numuniversalminimumFTvalue2}
Given a rectangle $A_{1}A_{2}A_{3}A_{4}$ in $\mathbb{R}^{2}$ such
that: $A_{1}=(0,0),$ $A_{2}=(7,0),$ $A_{3}=(7,4),$ $A_{4}=(0,4),$
with side lengths $a_{12}=a_{34}=7$, $a_{23}=a_{41}=4$ and initial
positive weights $(B_{1})_{1234}=3,$ $(B_{2})_{1234}=2.5$
$(B_{3})_{1234}=1.7$ $(B_{4})=1.5$ which correspond to the
vertices $A_{1},$ $A_{2},$ $A_{3}$ and $A_{4},$ respectively. By
substituting these data in (\ref{eq:evquad3}), (\ref{eq:evquad4})
(\ref{eq:evquad2}), (\ref{eq:evquad1}), (\ref{eq:evquad5}) of
Lemma~\ref{location4ft} we obtain the location of the 4wFT point
$A_{0}=(2.8274502,1.2787811)$ which coincides with result obtained
by the Weiszfeld algorithm (\cite{Weis:37},\cite{Weis:09}) with 7
digit precision and we derive that:

\begin{eqnarray}\label{maindirections1}
\alpha_{102}=138.625^{\circ},
\alpha_{203}=50.1502^{\circ},\nonumber\\
\alpha_{304}=102.986^{\circ}, \alpha_{401}=68.2392^{\circ}.
\end{eqnarray}

\begin{figure}
\centering
\includegraphics[scale=0.75]{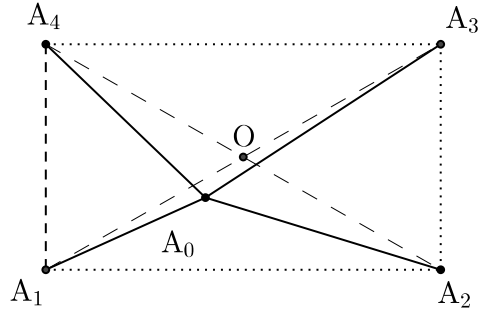}
\caption{A fixed weighted Fermat-Torricelli tree of degree four
having a universal Fermat-Torricelli minimum value $u_{FT}=\min
x_{G}=3.8088826$ with respect to a boundary weighted rectangle
taken from Example~2.}
\end{figure}

By substituting (\ref{maindirections1}) in
(\ref{plastic1P4quadmod}), (\ref{plastic2P5quadmod}) and
(\ref{invcond4quadmod}) or in (\ref{plastic14nmod}),
(\ref{plastic24nmod}) and (\ref{invcond4nmod}) we obtain the
dynamic plasticity equations of $A_{1}A_{2}A_{3}A_{4}:$

\begin{eqnarray}\label{dynamicplasticexam1}
(B_{1})_{1234}=4.2239621- 0.8159745 (B_4)_{1234},\nonumber\\
(B_{2})_{1234}=0.8393665+ 1.1070888 (B_4)_{1234},\nonumber\\
(B_{3})_{1234}=3.6366712- 1.2911143 (B_4)_{1234}.
\end{eqnarray}

Then, we  replace (\ref{dynamicplasticexam1})  for
$(B_{4})_{1234}=1.5,$ $(B_{4})_{1234}=1.2,$ $(B_{4})_{1234}=1.7,$
$(B_{4})_{1234}=1.7728955,$ in the objective function
(\ref{eq:B_1}) of a generalized Gauss tree of degree three, taking
into account (\ref{base1}), (\ref{a2solvesystem}) and
(\ref{a1solvesystem}) from Lemma~\ref{explicitsolution} and by
maximizing (\ref{eq:B_1}) w.r. to $x_{G},$ we derive the following
table (Fig.~2).

\begin{tabular}{|c|c|c|c|c|c|c|c|}
  \hline
  % after \\: \hline or \cline{col1-col2} \cline{col3-col4} ...
  $(B_{1})_{1234}$ & $(B_{2})_{1234}$ & $(B_{3})_{1234}$ & $(B_{4})_{1234}$ & $x_{G}$ & f  \\
  \hline
  3& 2.5& 1.7& 1.5& 3.8192408 & 34.5746856\\
   3.2447927& 2.1678731& 2.0873328 & 1.2& 3.8543169&  34.6371118\\
  2.8368055& 2.7214176 & 1.4417756& 1.7 & 3.8096235&  34.5330567\\
  2.7773246 & 2.8021194& 1.3476592& 1.7728955 & 3.8088826 & 34.5178864\\

  \hline
\end{tabular}

By substituting $l=0.0000001$ and (\ref{dynamicplasticexam1}) in
(\ref{universalvaluesxG}) and by minimizing
(\ref{universalvaluesxG}) w.r. to the variables $x_{G}$ and
$(B_{4})_{1234},$ we obtain the universal minimum
Fermat-Torricelli value $u_{FT}=3.8088826$ for $(B_{4})_{1234}=
1.7728955$ (see also the above Table, Fig.~2).

The universal absorbing rate is
$\frac{u_{FT}}{\sum_{i=1}^{4}(B_{i})_{1234}}=\frac{3.8088826}{8.7}=0.4378025.$

\end{example}

\begin{example}\label{numuniversalminimumFTvalue3}
Given the same rectangle $A_{1}A_{2}A_{3}A_{4}$ in
$\mathbb{R}^{2}$ with the one considered in
Example~\ref{nounivconstant} and initial positive weights
$(B_{1})_{1234}=3.1,$ $(B_{2})_{1234}=2.3$ $(B_{3})_{1234}=1.7$
$(B_{4})=1.4$ which correspond to the vertices $A_{1},$ $A_{2},$
$A_{3}$ and $A_{4},$ respectively. By substituting these data in
(\ref{eq:evquad3}), (\ref{eq:evquad4}) (\ref{eq:evquad2}),
(\ref{eq:evquad1}), (\ref{eq:evquad5}) of Lemma~\ref{location4ft}
we obtain the location of the 4wFT point $A_{0}=(2.381487,
1.1855484)$ which coincides with result obtained by the Weiszfeld
algorithm (\cite{Weis:37},\cite{Weis:09}) with 7 digit precision
and we derive that:

\begin{eqnarray}\label{maindirections1new}
\alpha_{102}=139.138^{\circ},
\alpha_{203}=45.7542^{\circ},\nonumber\\
\alpha_{304}=98.8792^{\circ}, \alpha_{401}=76.2283^{\circ}.
\end{eqnarray}

\begin{figure}
\centering
\includegraphics[scale=0.75]{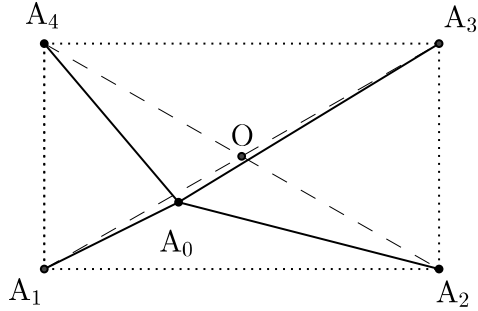}
\caption{A fixed weighted Fermat-Torricelli tree of degree four
having a universal Fermat-Torricelli minimum value $u_{FT}=\min
x_{G}=3.66326$ with respect to a boundary weighted rectangle taken
from Example~3.}
\end{figure}

By substituting (\ref{maindirections1new}) in
(\ref{plastic1P4quadmod}), (\ref{plastic2P5quadmod}) and
(\ref{invcond4quadmod}) or in (\ref{plastic14nmod}),
(\ref{plastic24nmod}) and (\ref{invcond4nmod}) we obtain the
dynamic plasticity equations of $A_{1}A_{2}A_{3}A_{4}:$

\begin{eqnarray}\label{dynamicplasticexam12}
(B_{1})_{1234}=4.1823652- 0.7731178 (B_4)_{1234},\nonumber\\
(B_{2})_{1234}=0.49794+ 1.2871855 (B_4)_{1234},\nonumber\\
(B_{3})_{1234}=3.8196947- 1.5140677(B_4)_{1234}.
\end{eqnarray}

By substituting $l=0.0000001$ and (\ref{dynamicplasticexam1}) in
(\ref{universalvaluesxG}) and by minimizing
(\ref{universalvaluesxG}) w.r. to the variables $x_{G}$ and
$(B_{4})_{1234},$ we obtain the universal minimum
Fermat-Torricelli value $u_{FT}=3.66326$ for $(B_{4})_{1234}=
1.8199325.$

The universal absorbing rate is
$\frac{u_{FT}}{\sum_{i=1}^{4}(B_{i})_{1234}}=\frac{3.66326}{8.5}=0.4309717$
(Fig.~3).

\end{example}

%--------------------------------------------------------------------

\begin{definition}\label{absorbingFermatTorricelli set}
A universal absorbing Fermat-Torricelli set with respect to a
fixed variable weighted Fermat-Torricelli tree of degree four is
the set of values of the Gauss variable $x_{G}$ which maximizes
the objective function of a generalized Gauss Menger tree of
degree three which is concentrated on the same weighted
Fermat-Torricelli tree of degree four, such that the variable
weights satisfy the dynamic plasticity equations of the weighted
boundary quadrilateral.
\end{definition}

\begin{definition}\label{Existenceuniversalminimumvalue}
We call a universal Fermat-Torricelli minimum value $u_{FT}$ the
minimum of the universal Fermat-Torricelli set regarding a fixed
variable weighted Fermat-Torricelli tree of degree four.
\end{definition}

\begin{remark}
The weighted Fermat-Torricelli tree of degree four taken from
example 2 has greater universal Fermat-Torricelli minimum value
$u_{FT}$ and universal absorbing rate than the weighted
Fermat-Torricelli tree of degree four w.r. to the same boundary
quadrilateral $A_{1}A_{2}A_{3}A_{4}$ with different weights
\end{remark}

%--------------------------------------------------------------------------------------------------

\section{Steady trees and evolutionary trees for a boundary convex quadrilateral}\label{UniversalabsFTset}

Consider a universal Fermat-Torricelli set $U$ and minimum value
$u_{FT}$ which corresponds to a fixed variable weighted
Fermat-Torricelli tree of degree four regarding a boundary convex
quadrilateral, which is derived as a limiting tree structure from
a generalized Gauss tree of degree.

\begin{definition}\label{Steadytree}A steady tree of degree four is a
weighted Fermat-Torricelli tree of degree four having as storage
at the 4wFT point $A_{0}$ a positive quantity less than $u_{FT}.$
\end{definition}

\begin{definition}\label{Evolutiarytree}An evolutionary tree of degree three is
a generalized Gauss Menger tree of degree three which is derived
by a weighted Fermat-Torricelli tree of degree four having as
storage at the 4wFT point $A_{0}$ a positive quantity equal or
greater than $u_{FT}$ and then decreases by an absorbing rate
$0<a_{G}<u_{FT}.$
\end{definition}

\begin{example}
Consider the same weighted Fermar-Torricelli tree four that we
have obtained in Example~2 for the boundary rectangle
$A_{1}A_{2}A_{3}A_{4}.$ A steady tree is a weighted
Fermat-Torricelli tree of degree four with storage level at
$A_{0}$ less than $u_{FT}=3.8088826.$

When the storage quantity at $A_{0}$ reaches  $u_{FT}=3.8088826,$
it stimulates the steady tree which starts to evolute
(Fig.~\ref{fig3}). Thus, the universal minimum Fermat-Torricelli
value unlocks the evolution of a generalized Gauss-Menger tree
which could be derived by a weighted Fermat-Torricelli tree of
degree four for the same boundary rectangle (or convex
quadrilateral).

For instance, if the storage level reaches
$u=3.8543169>u_{FT}=3.8088826$ and spends $a_{G}=0.5,$ then we
derive an evolutionary generalized Gauss tree having
$x_{G}=3.3543169$ (Fig.~\ref{fig4}).

\begin{figure}
\centering
\includegraphics[scale=0.75]{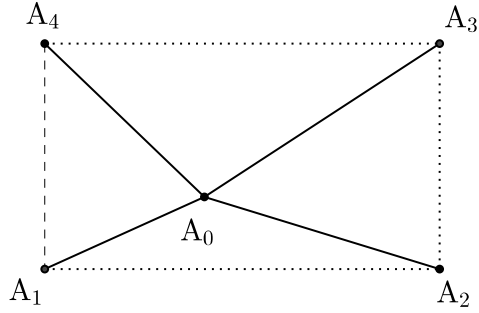}
\caption{An evolutionary tree of degree four having an initial
storage level $3.8543169$ and absorbing rate  $a_{G}=0$ with
respect to a boundary weighted rectangle taken from Example~2 for
$(B_{1})_{1234}=3.2447927,$ $(B_{2})_{1234}=2.1678731,$
$(B_{3})_{1234}=2.0873328,$ $(B_{4})_{1234}=1.2.$}\label{fig3}
\end{figure}

\begin{figure}
\centering
\includegraphics[scale=0.75]{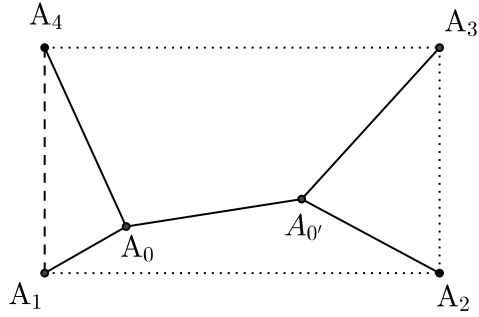}
\caption{An evolutionary tree of degree three having an initial
storage level $3.8543169$ and absorbing rate  $a_{G}=0.5$ with
respect to a boundary weighted rectangle taken from Example~2 for
$(B_{1})_{1234}=3.2447927,$ $(B_{2})_{1234}=2.1678731,$
$(B_{3})_{1234}=2.0873328,$ $(B_{4})_{1234}=1.2.$}\label{fig4}
\end{figure}

The evolutionary Gauss-Menger tree is obtained by substituting
$x_{G}=3.3543169,$ $(B_{1})_{1234}=3.2447927,$
$(B_{2})_{1234}=2.1678731,$ $(B_{3})_{1234}=2.0873328,$
$(B_{4})_{1234}=1.2$ in  (\ref{base1}), (\ref{a2solvesystem}) and
(\ref{a1solvesystem}) from Lemma~\ref{explicitsolution}. Thus, we
get:
\begin{eqnarray}
a_{1}=1.6642065, a_{2}= 2.7738702, a_{3}= 3.6321319,\nonumber\\
a_{4}=3.4873166, l=3.3543169.
\end{eqnarray}

%-------------------------------------------------------------------------------------

Suppose that the storage level reaches $u=3.82>u_{FT}=3.8088826$
and spends $a_{G}=0.2,$ then we derive another evolutionary
weighted Fermat-Torricelli tree of degree four and a generalized
Gauss tree of degree three having $x_{G}=3.62$
(Figs.~\ref{fig5},~\ref{fig6},\ref{fig7}).

By substituting $u=x_{G}=3.82$ and the dynamic plasticity
equations (\ref{dynamicplasticexam1}) taken from Example~2 in
(\ref{eq:B_1}), the maximum of (\ref{eq:B_1}) w.r. to
$(B_{4})_{1234}$ yields $(B_{4})_{1234}=1.4901507$ or
$(B_{4})_{1234}=2.0556426.$ By letting $(B_{4})_{1234}=1.4901507$
in (\ref{dynamicplasticexam1}), we get:
$(B_{1})_{1234}=3.0080371,$ $(B_{2})_{1234}=2.4890958,$
$(B_{3})_{1234}=1.7127149,$

and by letting $(B_{4})_{1234}=2.0556426$ in
(\ref{dynamicplasticexam1}), we get: $(B_{1})_{1234}=2.5466101,$
$(B_{2})_{1234}=3.1151456,$ $(B_{3})_{1234}=0.9826002.$

\begin{figure}
\centering
\includegraphics[scale=0.75]{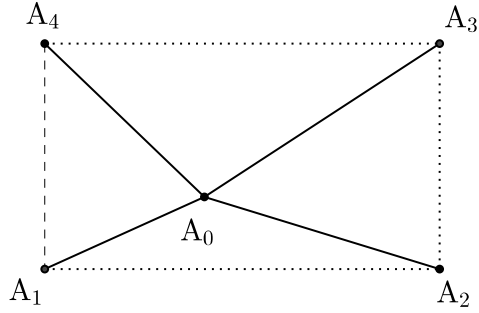}
\caption{An evolutionary tree of degree four having an initial
storage level $3.82$ and absorbing rate  $a_{G}=0$ with respect to
a boundary weighted rectangle taken from Example~2 for
$(B_{1})_{1234}=3.0080371,$ $(B_{2})_{1234}=2.4890958,$
$(B_{3})_{1234}=1.7127149,$
$(B_{4})_{1234}=1.4901507.$}\label{fig5}
\end{figure}

\begin{figure}
\centering
\includegraphics[scale=0.75]{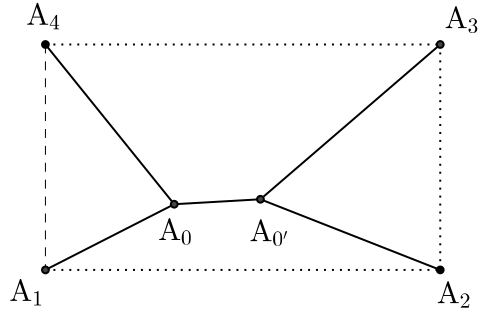}
\caption{An evolutionary tree of degree three having an initial
storage level $3.82$ and absorbing rate  $a_{G}=0.2$ with respect
to a boundary weighted rectangle taken from Example~2
$(B_{1})_{1234}=3.0080371,$ $(B_{2})_{1234}=2.4890958,$
$(B_{3})_{1234}=1.7127149,$
$(B_{4})_{1234}=1.4901507.$}\label{fig6}
\end{figure}

The first evolutionary Gauss-Menger tree is obtained by
substituting $x_{G}=3.62,$ $(B_{1})_{1234}=3.0080371,$
$(B_{2})_{1234}=2.4890958,$ $(B_{3})_{1234}=1.7127149,$
$(B_{4})_{1234}=1.4901507$ in  (\ref{base1}),
(\ref{a2solvesystem}) and (\ref{a1solvesystem}) from
Lemma~\ref{explicitsolution} (Fig.~\ref{fig6}). Thus, we get:
\begin{eqnarray}
a_{1}=2.5638686, a_{2}=3.4255328 , a_{3}=4.2080591 ,\nonumber\\
a_{4}=3.6397828, l=1.5309344.
\end{eqnarray}

%-------------------------------------------------------------------
The second evolutionary Gauss-Menger tree is obtained by
substituting $x_{G}=3.62,$ $(B_{1})_{1234}=2.5466101,$
$(B_{2})_{1234}=3.1151456,$ $(B_{3})_{1234}=0.9826002.$
$(B_{4})_{1234}=2.0556426.$ in  (\ref{base1}),
(\ref{a2solvesystem}) and (\ref{a1solvesystem}) from
Lemma~\ref{explicitsolution} (Fig.~\ref{fig7}). Thus, we get:
\begin{eqnarray}
a_{1}=2.6836315, a_{2}=3.0204233 , a_{3}=4.0857226 ,\nonumber\\
a_{4}=3.6424502, l=1.8001622.
\end{eqnarray}

\begin{figure}
\centering
\includegraphics[scale=0.75]{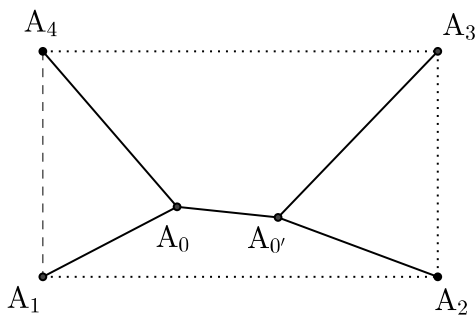}
\caption{An evolutionary tree of degree three having an initial
storage level $3.82$ and absorbing rate  $a_{G}=0.2$ with respect
to a boundary weighted rectangle taken from Example~2
$(B_{1})_{1234}=2.5466101,$ $(B_{2})_{1234}=3.1151456,$
$(B_{3})_{1234}=0.9826002.$
$(B_{4})_{1234}=2.0556426.$}\label{fig7}
\end{figure}

\end{example}

\end{document}